\g@addto@macro{\UrlBreaks}{\UrlOrds}
\renewenvironment{abstract}
{\small\vspace{-1em}
\begin{center}
\bfseries\abstractname\vspace{-.5em}\vspace{0pt}
\end{center}
\list{}{
\setlength{\leftmargin}{0.6in}
\setlength{\rightmargin}{\leftmargin}}
\item\relax}
{\endlist}
\declaretheorem[name=Theorem, numberwithin=section]{theorem}
\declaretheorem[name=Lemma, sibling=theorem]{lemma}
\declaretheorem[name=Corollary, sibling=theorem]{corollary}
\declaretheorem[name=Problem, sibling=theorem]{problem}
\declaretheorem[name=Claim, sibling=theorem]{claim}
\declaretheorem[name=Observation, style=remark, sibling=theorem]{observation}
\def\cqedsymbol{\ifmmode$\lrcorner$\else{\unskip\nobreak\hfil
\penalty50\hskip1em\null\nobreak\hfil$\lrcorner$
\parfillskip=0pt\finalhyphendemerits=0\endgraf}\fi}
\def\N{\mathbb{N}}
\def\R{\mathbb{R}}
\def\Z{\mathbb{Z}}
\let\le\leqslant
\let\ge\geqslant
\let\leq\leqslant
\let\geq\geqslant
\title{Optimal labelling schemes for adjacency, comparability, and reachability
\footnotetext{\llap{\textsuperscript{*}}M.B. and L.E. are supported by the ANR Projects DISTANCIA (\textsc{ANR-17-CE40-0015}) and GrR (\textsc{ANR-18-CE40-0032}), and by LabEx PERSYVAL-lab (\textsc{ANR-11-LABX-0025}).}}
\author[1]{Marthe Bonamy*\textsuperscript{,}}
\author[2]{Louis Esperet*\textsuperscript{,}}
\author[3]{Carla Groenland}
\author[3]{Alex Scott}
\affil[1]{CNRS, LaBRI, Université de Bordeaux, Bordeaux, France.}
\affil[2]{CNRS, G-SCOP, Univ. Grenoble Alpes, Grenoble, France.}
\affil[3]{Mathematical Institute, University of Oxford, Oxford OX2 6GG, United Kingdom.}
\date{\today}
\begin{document}
\maketitle

\begin{abstract}
We construct asymptotically optimal adjacency labelling schemes for every hereditary class containing $2^{\Omega(n^2)}$ $n$-vertex graphs as $n\to \infty$. This regime contains many classes of interest, for instance perfect graphs or comparability graphs, for which we obtain an adjacency labelling scheme with labels of $n/4+o(n)$ bits per vertex. This implies the existence of a reachability labelling scheme for digraphs with labels of $n/4+o(n)$ bits per vertex and comparability labelling scheme for posets with labels of $n/4+o(n)$ bits per element. All these results are best possible, up to the lower order term.
\end{abstract}

\section{Introduction}\label{sec:intro}

When representing graphs, say with adjacency lists or
matrices, vertex identifiers are usually just pointers in the data structure.  
In contrast, a graph is \emph{implicitly   represented} when each vertex of the graph carries enough information
so that some properties, for instance adjacency, can be efficiently determined from the identifiers in a local manner (cf.~\cite{KNR88,Spinrad03}). 
The standard example is that of interval graphs: if $G$ is an interval graph with $n$ vertices\footnote{Throughout the paper, $n$ is implicitly the number of vertices in the graph at hand.}, we can assign to each vertex $u$ some interval $I(u) \subseteq [1,2n]$ with integer endpoints so
that $u,v$ are adjacent if and only if $I(u) \cap I(v) \neq \emptyset$. 
Clearly, to represent $G$ it suffices to store the corresponding intervals. Although $G$ may have a quadratic number of edges, such an implicit representation uses 
$2\log{n} + O(1)$ bits per vertex\footnote{Throughout the paper, $\log{n}$ denotes the binary logarithm of $n$.}.  
Compact representations have several advantages, not only for memory storage, but also from algorithmic perspectives. 

The route to more efficient representations is conceptually simple: it is a matter of exploiting the structure of the graph class to spell out as few adjacencies as possible and derive the rest from the existing information. On the other hand, how can we argue that a representation is best possible?

Let us define the problem more formally. We say that a graph class $\mathcal{G}$ admits an \emph{adjacency labelling scheme} with labels of $f(n)$ bits if, for any integer $n$, we can label the vertices of any $n$-vertex graph of $\mathcal{G}$ with strings of $f(n)$ bits such that, using only the labels assigned to $u$ and $v$, we can determine whether $u$ and $v$ are adjacent in the graph. An adjacency labelling scheme is \textit{efficient} if the encoding takes polynomial time and the decoding takes constant time in the word RAM model with words of $\Theta(\log n)$ bits (see Section \ref{subsec:dict} for the details). 
An adjacency labelling scheme describing a graph $G$ also describes all the induced subgraphs of $G$, so it is customary to consider adjacency labelling schemes for \emph{hereditary} classes of graphs, that is, classes of graphs that are closed under taking induced subgraphs.

Given a graph class $\mathcal{G}$ and an integer $n$, the set of $n$-vertex graphs\footnote{All the graphs considered in this paper are unlabelled, so all the graphs are considered up to isomorphism.} of $\mathcal{G}$ is denoted by $\mathcal{G}_n$. If $\mathcal{G}$ admits an adjacency labelling scheme with labels of $f(n)$ bits, then the total number of bits describing an $n$-vertex graph $G\in \mathcal{G}$ is $n\cdot f(n)$, and thus $|\mathcal{G}_n|\le 2^{n\cdot f(n)}$. Given a class $\mathcal{G}$ and an integer $n$, let 
$\mu_\mathcal{G}(n)=\tfrac1{n} \log |\mathcal{G}_n|$ (we will write  $\mu(n)$, if $\mathcal{G}$ is clear from context).
Note that $\mu_\mathcal{G}(n)\leq \frac{n}2$ for any $\mathcal{G}$. The observation above can be restated as follows.

\begin{observation}\label{obs:mu}
For every class $\mathcal{G}$, no adjacency labelling scheme for $\mathcal{G}$ has less than $\mu_{\mathcal{G}}(n)$ bits per vertex.
\end{observation}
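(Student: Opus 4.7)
The plan is to run the information-theoretic counting argument that the paragraph preceding the statement already sketches, being a little careful about the isomorphism-vs-labelled-graph distinction (which is the only non-trivial point, since $\mathcal{G}_n$ counts isomorphism classes).

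First I would fix an adjacency labelling scheme for $\mathcal{G}$ with labels of $f(n)$ bits and, for every $n$-vertex graph $G \in \mathcal{G}_n$, pick any valid labelling $\ell_G : V(G) \to \{0,1\}^{f(n)}$ of its vertices. Imposing an arbitrary ordering on $V(G)$ (for instance, one that makes the resulting word lexicographically smallest), I concatenate the $n$ labels to produce a single binary string $\Phi(G) \in \{0,1\}^{n\cdot f(n)}$. This defines a map $\Phi : \mathcal{G}_n \to \{0,1\}^{n\cdot f(n)}$.

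The key step is to check that $\Phi$ is injective on isomorphism classes. Suppose $\Phi(G) = \Phi(G')$ for $G, G' \in \mathcal{G}_n$. Then the two graphs have the same ordered sequence of $n$ labels, so applying the decoder of the labelling scheme to every pair of labels recovers the full adjacency matrix in both cases, and this matrix is the same for $G$ and $G'$. Hence $G$ and $G'$ are isomorphic, and therefore equal as elements of $\mathcal{G}_n$.

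It then follows that $|\mathcal{G}_n| \le |\Phi(\mathcal{G}_n)| \le 2^{n\cdot f(n)}$; taking binary logarithms and dividing by $n$ gives $f(n) \ge \tfrac{1}{n}\log|\mathcal{G}_n| = \mu_\mathcal{G}(n)$, which is the desired bound. There is no real obstacle here; the only thing one must take care of is the injectivity claim, since the scheme is defined up to isomorphism while labels are assigned to specific vertices. Fixing an ordering (or summing over all orderings, which only strengthens the inequality) resolves this cleanly.
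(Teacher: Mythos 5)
Your proposal is correct and is exactly the counting argument the paper itself gives (in the paragraph immediately preceding the observation): concatenating the $n$ labels of $f(n)$ bits each yields an injective encoding of $\mathcal{G}_n$ into $\{0,1\}^{n\cdot f(n)}$, whence $|\mathcal{G}_n|\le 2^{n\cdot f(n)}$ and $f(n)\ge \mu_{\mathcal{G}}(n)$. The extra care you take over the isomorphism-class issue is a valid and harmless refinement of the same argument, not a different route.
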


 A natural question is whether this lower bound of $\mu(n)$ can be attained. There are examples of non-hereditary classes for which this lower bound cannot be attained~\cite{Spinrad03}, so in the remainder of the paper we only consider hereditary classes.
 
 \smallskip

In the regime $\mu(n)=\Theta(\log n)$, the \emph{Implicit Graph Conjecture}~\cite{KNR88,Spinrad03} posits that every hereditary class $\mathcal{G}$ with $\mu_{\mathcal{G}}(n)=O(\log n)$ has an adjacency labelling scheme with labels of $O(\log n)=O(\mu(n))$ bits per vertex. Although the conjecture has been proved in some special cases, it is still open in general (see~\cite{ACLZ15,BGKTW21} for recent results and references on the conjecture). Note that in this regime the conjecture only posits that there is an adjacency labelling scheme with labels of at most a constant times $\mu(n)$ bits per vertex.

\smallskip

The purpose of our paper is to show that the lower bound $\mu(n)$ is attained (up to lower order terms) in the denser regime $\mu(n)=\Theta(n)$, which also has many applications.
We prove the following.

\begin{theorem}
\label{thm:Hfree}
Let $\mathcal{G}$ be a hereditary graph class. For each $\delta>0$, $\mathcal{G}$ has an efficient adjacency labelling scheme using $\mu_{\mathcal{G}}(n)+\delta n$ bits per vertex. Moreover, this can be turned into an adjacency labelling scheme using $\mu_{\mathcal{G}}(n)+o(n)$ bits per vertex, which is tight up to the $o(n)$ term.
\end{theorem}
Theorem~\ref{thm:Hfree} shows that in this regime,  graphs from $\mathcal{G}$ can not only be compressed in an optimal way, but that the representation can be uniformly distributed among the vertices, with efficient encoding and decoding. 
To the best of our knowledge, this was already known only in the specific cases where $\mathcal{G}$ is the class of all graphs, or the class of all bipartite graphs (see~\cite{Alon17,AlstrupKaplanThorupZwick} for recent sharper results on these two classes).

\medskip

Note that Theorem~\ref{thm:Hfree} is applicable to a wide range of graph classes, for instance classes of graphs of bounded chromatic number (such as bipartite graphs), or classes of graphs excluding some (induced) subgraphs (such as perfect graphs, chordal graphs, or split graphs). An example is the class of \emph{string graphs on $S$} for some surface $S$, i.e.\ the class of intersection graphs of continuous curves embedded in $S$. It follows from \cite[Corollary 3]{AlonBaloghBollobasMorris} that for this class $\mu(n)=(1-1/\omega(S)+o(1))\cdot \tfrac{n}2$, where $\omega(S)$ denotes the size of the largest complete graph embeddable in $S$. Using Theorem~\ref{thm:Hfree}, this directly implies the following.

\begin{corollary}
\label{cor:string}
For any surface $S$, the class of string graphs on $S$  has an adjacency labelling scheme using $(1-1/\omega(S))\cdot \tfrac{n}2+o(n)$ bits per vertex, which is asymptotically tight.
\end{corollary}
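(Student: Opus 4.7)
The plan is to observe that Corollary~\ref{cor:string} is an immediate consequence of Theorem~\ref{thm:Hfree} once we identify $\mu(n)$ for the class in question, so the proof is essentially a matter of plugging in the right number and verifying the hypotheses.

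First I would check that the class of string graphs on a fixed surface $S$ is hereditary. This is straightforward: if $G$ is represented as the intersection graph of a family of curves $\{\gamma_v\}_{v\in V(G)}$ drawn in $S$, then for any $U\subseteq V(G)$ the induced subgraph $G[U]$ is the intersection graph of the sub-family $\{\gamma_v\}_{v\in U}$, which is again a family of curves in $S$. Hence the class is closed under induced subgraphs and Theorem~\ref{thm:Hfree} applies.

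Next I would invoke the cited speed estimate \cite[Corollary 3]{AlonBaloghBollobasMorris}, which gives
\[
\mu_{\mathcal{G}}(n) \;=\; \Bigl(1-\tfrac{1}{\omega(S)}+o(1)\Bigr)\cdot \tfrac{n}{2},
\]
where $\mathcal{G}$ is the class of string graphs on $S$. Feeding this value into the upper bound of Theorem~\ref{thm:Hfree} yields an efficient adjacency labelling scheme with
\[
\mu_{\mathcal{G}}(n)+o(n) \;=\; \Bigl(1-\tfrac{1}{\omega(S)}\Bigr)\cdot \tfrac{n}{2}+o(n)
\]
bits per vertex, since the $o(1)\cdot n/2$ error from the speed estimate is absorbed into the $o(n)$ term. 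For the matching lower bound I would simply apply Observation~\ref{obs:mu}, or equivalently the tightness part of Theorem~\ref{thm:Hfree}: any adjacency labelling scheme needs at least $\mu_{\mathcal{G}}(n)=(1-1/\omega(S))\cdot n/2 - o(n)$ bits per vertex.

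There is essentially no obstacle here beyond checking hereditariness and substituting the formula, since all the hard work has been done in Theorem~\ref{thm:Hfree} and in the Alon--Balogh--Bollob\'as--Morris speed estimate. The one small subtlety worth noting is that the $o(1)$ inside the speed estimate must be multiplied by $n/2$ before being merged with the $o(n)$ error from Theorem~\ref{thm:Hfree}, but this is routine.
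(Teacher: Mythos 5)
Your proposal is correct and follows exactly the paper's argument: the class of string graphs on $S$ is hereditary, the speed estimate from \cite[Corollary 3]{AlonBaloghBollobasMorris} gives $\mu_{\mathcal{G}}(n)=(1-1/\omega(S)+o(1))\cdot\tfrac{n}{2}$, and then Theorem~\ref{thm:Hfree} (together with Observation~\ref{obs:mu} for tightness) yields the result immediately. The bookkeeping with the $o(1)$ term is handled correctly.
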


Another interesting example for us is the class of graphs excluding the 5-cycle $C_5$ as an induced subgraph. It is well known that for this class, we have $\mu(n)=\tfrac{n}4+o(n)$ (more will be said about this in Section~\ref{sec:hereditary}). We thus obtain the following as an immediate consequence of Theorem~\ref{thm:Hfree}.

\begin{corollary}
\label{cor:perfect}
The class of graphs with no induced $C_5$ has an adjacency labelling scheme using $n/4+o(n)$ bits per vertex, which is asymptotically tight.
\end{corollary}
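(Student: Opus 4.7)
The plan is to apply Theorem~\ref{thm:Hfree} to the hereditary class $\mathcal{G} = \Forb(C_5)$ of graphs with no induced $C_5$. Once $\mu_{\mathcal{G}}(n) = n/4 + o(n)$ is established, both the existence of the efficient labelling scheme with $n/4 + o(n)$ bits per vertex and the matching $n/4 - o(n)$ lower bound (via Observation~\ref{obs:mu}) follow immediately. So the entire content of the proof is to compute $\mu_{\mathcal{G}}(n)$.

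For the lower bound $\mu_{\mathcal{G}}(n) \geq n/4 - o(n)$, I would embed a large family of $C_5$-free graphs: the split graphs, i.e., those graphs whose vertex set partitions into a clique and an independent set. Split graphs are $C_5$-free because any induced $C_5$ would need at least three vertices on one side of the partition, forcing either a triangle or an independent set of size 3 in $C_5$, neither of which exists. Choosing an arbitrary bipartite graph between a clique of size $\lceil n/2\rceil$ and an independent set of size $\lfloor n/2\rfloor$ already yields $2^{\lfloor n^2/4\rfloor}$ labelled split graphs, hence at least $2^{\lfloor n^2/4\rfloor}/n!$ unlabelled ones, giving $\mu_{\mathcal{G}}(n) \geq n/4 - O(\log n)$.

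The upper bound $\mu_{\mathcal{G}}(n) \leq n/4 + o(n)$ invokes the speed theorem of Alekseev and Bollob\'as--Thomason, which states that for every hereditary class $\mathcal{H}$ one has $\log|\mathcal{H}_n| = (1 - 1/k(\mathcal{H}) + o(1))\binom{n}{2}$, where $k(\mathcal{H})$ is the \emph{coloring number}: the largest $k$ for which some pattern $(s_1,\ldots,s_k) \in \{0,1\}^k$ has the property that every graph built by partitioning $V$ into $k$ parts, making part $i$ a clique when $s_i=1$ and an independent set when $s_i=0$, and placing arbitrary bipartite graphs between parts, belongs to $\mathcal{H}$. For $\Forb(C_5)$ the value $k=2$ is attained via split graphs; a short case analysis over the four $\{0,1\}^3$-patterns (up to symmetry) shows that each admits a realisation containing an induced $C_5$, so no $k=3$ pattern works. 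Hence $k(\Forb(C_5)) = 2$ and $\mu_{\mathcal{G}}(n) \leq n/4 + o(n)$.

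The main obstacle is really just citing the correct formulation of this speed theorem and pinning down the coloring-number convention used; the surrounding combinatorics (the split-graph construction and the $k=3$ case analysis) is routine. Presumably Section~\ref{sec:hereditary} sets up exactly the version needed, so in the final write-up the argument would be just a couple of lines invoking that machinery together with Theorem~\ref{thm:Hfree}.
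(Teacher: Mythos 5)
Your proposal is correct and follows essentially the same route as the paper: establish $\mu_{\mathcal{G}}(n)=n/4+o(n)$ via the Alekseev--Bollob\'as--Thomason theorem by showing the colouring number of $\Forb(C_5)$ equals $2$ (lower bound from split/bipartite graphs, upper bound because every partition type with three parts realises an induced $C_5$ --- indeed $C_5$ itself lies in $\mathcal{H}(a,b)$ for every $a+b=3$), and then apply Theorem~\ref{thm:Hfree} together with Observation~\ref{obs:mu}. This is exactly the computation carried out in Section~\ref{sec:hereditary} of the paper.
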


Given a partially ordered set (poset, in short) $(P,<)$, the \emph{comparability graph} of $P$ is the graph whose vertices are the elements of $P$, and in which two vertices are adjacent if and only if the two corresponding elements of $P$ are comparable. A graph $G$ is \emph{perfect} if for any induced subgraph $H$ of $G$, the chromatic number and the clique number of $H$ coincide. It is well known that comparability graphs are perfect.
Note that the class of graphs with no induced $C_5$ contains the class of perfect graphs, and thus the class of comparability graphs, hence Corollary~\ref{cor:perfect} also applies to these classes (and is tight for them as well).

\medskip

The key ingredient in the proof of Theorem \ref{thm:Hfree} is Szemerédi's Regularity Lemma, which we describe in Section~\ref{sec:prelims}. The downside of this tool is that even though our results are best possible asymptotically, the $o(n)$ term in Theorem~\ref{thm:Hfree} is such that $\tfrac1{n}\cdot o(n)$ is an extremely slowly decreasing function of $n$. Moreover, the ``efficient" algorithm hides huge constants for encoding and decoding. In the case of comparability graphs (for which we present two important applications below), we can significantly improve the lower order term and the complexity of our encoding and decoding compared to Corollary \ref{cor:perfect}. The proof is short and does not use any deep result.

\begin{theorem}
\label{thm:comparability}
The class of comparability graphs admits an adjacency scheme with labels of $n/4+O(n^{3/4}\log^2 n)$ bits per vertex and an efficient adjacency labelling scheme with
labels of $n/4+O(n(\log \log n)^2/\log^{1/4}n)$ bits per vertex.
\end{theorem}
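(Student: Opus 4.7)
The overall plan is to sidestep Szemerédi's Regularity Lemma (which is what forces the very slow $o(n)$ term in Theorem~\ref{thm:Hfree}) and instead exploit the structure of posets directly. The guiding picture is the Kleitman--Rothschild structural theorem: almost every $n$-element poset decomposes into three layers of sizes $\tfrac{n}{4},\tfrac{n}{2},\tfrac{n}{4}$, with the roughly $n^2/4$ bits of comparability information living essentially in the two bipartite relations between consecutive layers. The plan is to establish a worst-case robust version of this picture and then distribute those bits uniformly across the vertices.

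First, given a comparability graph $G$, compute in polynomial time a transitive orientation (via modular decomposition) and so view the input as a poset $P$ on $v_1,\dots,v_n$; the comparability matrix is a $0/1$ matrix subject to transitivity, upper-triangular once a linear extension is fixed. Partition $V(P)$ into three layers using up-set and down-set sizes: for a threshold $t=t(n)$, declare $v\in L_1$ if $|\{w:w<_P v\}|\le t$, $v\in L_3$ if $|\{w:w>_P v\}|\le t$, and $v\in L_2$ otherwise. A counting/entropy argument is needed here to show that the number of comparable pairs inside any single layer is $o(n^2)$, so that essentially all information concentrates in the two bipartite relations $R_1\subseteq L_1\times L_2$ and $R_3\subseteq L_2\times L_3$, each of which has at most $\tfrac{n^2}{8}$ entries.

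To share these bipartite relations uniformly among the vertices, I would use a splitting trick in the spirit of Alon's bipartite scheme: for $R\subseteq A\times B$, partition $A=A_0\sqcup A_1$ and $B=B_0\sqcup B_1$ into equal halves, let each vertex of $A_0$ store its relation to $B_1$ and symmetrically on the other side, so that for every $(a,b)\in A\times B$ the bit $R(a,b)$ appears in at least one of the two labels. Applying this to $R_1$ and $R_3$, combined with the vertex's layer-index, identifier, and the cheap intra-layer data, yields (after a careful balancing) a label of $\tfrac{n}{4}+O(n^{3/4}\log^2 n)$ bits per vertex, and adjacency is decoded by reading the two layer-labels and looking up the relevant stored bit.

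The main obstacle is to bound the deviation from the clean three-layer picture for worst-case (non-Kleitman--Rothschild) posets: vertices of intermediate up-set/down-set size, long chains threading across layers, and layers that are not perfect antichains must all be encoded explicitly, and controlling the size of these corrections is what turns a naive $o(n)$ bound into the explicit $O(n^{3/4}\log^2 n)$ error. For the second, efficient bound, the decomposition, encoding and decoding must additionally run in polynomial (resp.\ constant) time in the word RAM model; I would do this by replacing the fine decomposition with a coarser one using blocks of size $\Theta(\log^{1/4}n/\log\log n)$ accessed via bit-packed dictionaries, trading a polynomial blow-up of the correction term for constant-time queries and yielding the weaker error $O(n(\log\log n)^2/\log^{1/4}n)$.
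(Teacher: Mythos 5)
There is a genuine gap, and it is fatal to the approach rather than a fixable technicality. The Kleitman--Rothschild three-layer picture describes \emph{almost all} posets; it has no worst-case analogue, and the lemma you say ``a counting/entropy argument is needed'' for is simply false. Two counterexamples: (i) a height-$2$ poset with $n/2$ minimal and $n/2$ maximal elements and an arbitrary bipartite relation between them --- here your $L_2$ is empty, $R_1$ and $R_3$ carry no information, and all $n^2/4$ bits sit in the $L_1\times L_3$ relation, which your scheme never stores; (ii) a height-$4$ poset with levels $A_1<A_2<A_3<A_4$ of size $n/4$ each and an arbitrary relation between $A_2$ and $A_3$ --- here $A_2\cup A_3$ lands inside $L_2$ and the \emph{within-layer} comparability information is $\Omega(n^2)$. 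Patching this by running a bipartite scheme on each of $L_1\times L_2$, $L_2\times L_3$, $L_1\times L_3$ and recursing inside $L_2$ does not rescue the count: with a bipartite labelling costing roughly $|A||B|/n$ bits per vertex, three arbitrary cross-relations plus a nontrivial middle layer already exceed $n/4$ per vertex. Your own ``main obstacle'' paragraph is exactly the point at which the argument cannot be completed.

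The paper's proof uses a completely different mechanism, which is the idea you are missing. Call a comparable pair $x<y$ \emph{heavy} if at least $\gamma n$ elements $z$ satisfy $x<z<y$; a greedy hitting-set argument produces a small set $S$ such that every heavy pair is certified by some $z\in S$ with $x<z<y$, so all heavy adjacencies cost only $O(|S|)$ bits per vertex. In the remaining ``light'' graph, the in-neighbourhood and out-neighbourhood of any vertex induce subgraphs of degree at most $\gamma n$ (else a heavy pair would appear), so a second covering argument handles all \emph{hubs} (vertices of large in- and out-degree) via a few such neighbourhoods, and the non-hubs split into a set $V^+$ of small out-degree and a set $V^-$ of small in-degree. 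The only quadratic-information object left is the single bipartite graph between $V^+$ and $V^-$, and one application of the bipartite labelling scheme of Lemma~\ref{lem:bipartite} yields the $n/4$ leading term; all covering sets and low-degree subgraphs are absorbed into the error term, with dictionaries (Theorem~\ref{thm:dictionaries} versus Corollary~\ref{cor:dict_opt}) giving the trade-off between the $O(n^{3/4}\log^2 n)$ bound and the constant-query-time $O\bigl(n(\log\log n)^2/\log^{1/4}n\bigr)$ bound. No layer decomposition of the poset, and no appeal to typical poset structure, is involved.
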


We now describe two important consequences of Theorem~\ref{thm:comparability} for comparability labelling schemes in posets and reachability labelling schemes in digraphs.

\subsubsection*{Comparability in posets}

We say that a class $\mathcal{P}$ of posets admits a \emph{comparability labelling scheme} with labels of $f(n)$ bits if for every integer $n$, we can label the elements of any $n$-element poset of $\mathcal{P}$ with strings of at most $f(n)$ bits such that it can be determined whether $x$ and $y$ are comparable in the poset (and if so, whether $x \le y$) using only the labels assigned to $x$ and $y$. 

Given a poset $(P,<)$, we can consider a linear ordering $x_1,\ldots,x_n$ of $P$ (i.e.\ an ordering such that $i<j$ whenever $x_i<x_j$). If each element $x_i$ stores its index $i$ (this costs $O(\log n)$ bits), then whenever two elements $x$ and $y$ are comparable they can decide whether $x<y$ or $y<x$ by looking at their indices. Consequently, by appending indices of the elements to an adjacency labelling scheme for the comparability graphs of the posets in a class $\mathcal{P}$, we obtain a comparability labelling scheme for the class $\mathcal{P}$, with only $O(\log n)$ additional bits per element. We thus obtain the following as an immediate consequence of Theorem~\ref{thm:comparability}.

\begin{corollary}
\label{cor:comparabilityposet}
The class of all posets admits a comparability labelling scheme with labels of $n/4+O(n^{3/4}\log^2 n)$ bits per element and an efficient comparability labelling scheme with
labels of $n/4+O(n(\log \log n)^2/\log^{1/4}n)$ bits per element.
\end{corollary}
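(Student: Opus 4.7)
The plan is to reduce the comparability labelling problem for posets directly to the adjacency labelling problem for comparability graphs, and then invoke Theorem~\ref{thm:comparability}. The only extra information we need on top of adjacency is, for each comparable pair $\{x,y\}$, the direction of the inequality. A linear extension of the poset captures this information with only $O(\log n)$ bits per element.

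More precisely, given an $n$-element poset $(P,<)$, I would first fix a linear extension $x_1 < x_2 < \cdots < x_n$ (i.e.\ a total order on $P$ such that $i < j$ whenever $x_i < x_j$); such an extension exists by topological sorting. I would then compute the comparability graph $G$ of $P$ and apply Theorem~\ref{thm:comparability} to $G$, producing either the non-efficient scheme with $n/4+O(n^{3/4}\log^2 n)$ bits per vertex or the efficient scheme with $n/4+O(n(\log\log n)^2/\log^{1/4} n)$ bits per vertex. The label of $x_i$ in the comparability scheme is obtained by concatenating the adjacency label produced by Theorem~\ref{thm:comparability} with the binary encoding of the index $i$, using $\lceil \log n \rceil$ additional bits.

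The decoder, given labels of two elements $x$ and $y$, first extracts the adjacency part and runs the adjacency decoder of Theorem~\ref{thm:comparability}: if the answer is ``non-adjacent'', then $x$ and $y$ are incomparable in $P$ and we output this. If the answer is ``adjacent'', then $x$ and $y$ are comparable; we extract the two indices $i,j$ stored in the labels and declare $x \le y$ iff $i \le j$. Correctness follows because a linear extension preserves the direction of every comparable pair.

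The only thing to check is the bit accounting: the extra $\lceil \log n \rceil$ bits per element are absorbed into both error terms, since $\log n = o(n^{3/4}\log^2 n)$ and $\log n = o(n(\log\log n)^2/\log^{1/4} n)$. Encoding remains polynomial-time (topological sort plus the encoder of Theorem~\ref{thm:comparability}), and decoding remains constant-time in the word RAM model, since reading an $O(\log n)$-bit index and comparing two such indices are constant-time operations. There is no real obstacle here; the only minor point is ensuring that the linear extension is computed from the poset (not from the comparability graph alone, where the orientation is lost), which is immediate from the input representation of $P$.
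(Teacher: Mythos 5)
Your proposal is exactly the argument the paper uses: fix a linear extension, append each element's index ($O(\log n)$ bits, absorbed into both error terms) to its adjacency label from Theorem~\ref{thm:comparability}, and decode by first testing adjacency and then comparing indices. The approach and the bookkeeping are both correct and match the paper's derivation.
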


Munro and Nicholson \cite{MunroNicholson} proved that posets can be represented in $n^2/4+o(n^2)$ bits in such a way that queries of the form `$a\leq b?$' can be answered by inspecting only a constant number of bits. 
Corollary~\ref{cor:comparabilityposet} shows that an encoding of the same total size can be obtained by distributing the information uniformly between the elements of the poset.

\subsubsection*{Reachability in digraphs}

We say that a vertex $u$ can \emph{reach} a vertex $v$ in a directed graph (digraph, in short) if there is a directed path in the digraph from $u$ to $v$. 
We say that a class $\mathcal{C}$ of digraphs admits a \emph{reachability labelling scheme} with labels of $f(n)$ bits if for any integer $n$, we can label the vertices of any $n$-vertex digraph of $\mathcal{C}$ with strings of at most $f(n)$ bits such that it can be determined whether $u$ can reach $v$ in the digraph using only the labels assigned to $u$ and $v$.

It is well-known (see for instance~\cite{DulekebaGawrychowskiJanczewski}) that reachability queries in digraphs can be reduced to comparability queries in posets as follows. Given a digraph $D$, we start by contracting each strong component of $D$ into a single vertex. Let $D'$ be the resulting acyclic digraph, and suppose that ayclic digraphs have a reachability labelling scheme with labels of $f(n)$ bits. Then a reachability labelling for $D'$ can be turned into a reachability labelling for $D$ by giving to each vertex $v$ of $D$ the labelling in $D'$ of the strong component containing $v$, followed by a $O(\log n)$ bit label uniquely identifying each strong component. This gives a reachability labelling scheme for all digraphs with labels of $f(n)+O(\log n)$ bits per vertex.

So it suffices to design a reachability labelling for acyclic digraphs. Given an acyclic digraph $D$, and two vertices $u,v$ in $D$, we write $u<v$ if there is a directed path from $u$ to $v$. Since $D$ is acyclic, $(D,<)$ forms a poset and comparability queries in this poset are precisely reachability queries in $D$. This immediately implies the following.

\begin{corollary}
\label{cor:digraphs_reachability}
The class of all digraphs admits a reachability labelling scheme with labels of $n/4+O(n^{3/4}\log^2 n)$ bits per vertex and an efficient reachability labelling scheme with
labels of $n/4+O\left(\tfrac{n(\log \log n)^2}{\log^{1/4}n}\right)$ bits per vertex.
\end{corollary}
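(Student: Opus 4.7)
The plan is to combine the two reductions already sketched in the preceding discussion with Corollary~\ref{cor:comparabilityposet}. No new ideas are required; the corollary is essentially a bookkeeping consequence of the poset result obtained by routing it through two standard reductions, so the main thing to check is that the $O(\log n)$ overheads are absorbed by the lower-order terms.

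First, I would reduce reachability in an arbitrary $n$-vertex digraph $D$ to reachability in an acyclic digraph. Compute the strongly connected components of $D$ in polynomial time (e.g.\ by Tarjan's algorithm) and form its condensation $D'$, which is acyclic. Given a reachability labelling scheme for acyclic digraphs with $f(n)$ bits per vertex, assign to each vertex $v$ of $D$ the label of its SCC in $D'$, concatenated with a $\lceil \log n\rceil$-bit identifier of that SCC. On a query $(u,v)$, the decoder first compares the two identifiers: if they agree, $u$ and $v$ lie in a common SCC and $u$ reaches $v$; otherwise it invokes the acyclic-digraph decoder on the two SCC labels. The encoding stays polynomial and the decoding constant-time in the word RAM model, at the cost of only $O(\log n)$ extra bits per vertex.

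Second, I would reduce reachability in an acyclic digraph $D$ to comparability in a poset. Define $u<v$ iff there is a directed path from $u$ to $v$ in $D$; acyclicity makes $(V(D),<)$ a poset, and a comparability query in this poset (including the direction $x\le y$ vs $y\le x$) is precisely a reachability query in $D$. Applying Corollary~\ref{cor:comparabilityposet} then yields a reachability labelling scheme for acyclic digraphs with $n/4+O(n^{3/4}\log^2 n)$ bits per vertex, and an efficient scheme with $n/4+O(n(\log\log n)^2/\log^{1/4}n)$ bits per vertex.

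Plugging these two bounds into the first reduction delivers the two schemes announced in the corollary: the additive $O(\log n)$ coming from the SCC identifiers is dominated both by $O(n^{3/4}\log^2 n)$ and by $O(n(\log\log n)^2/\log^{1/4}n)$, and efficiency is preserved since SCC decomposition and the encoder/decoder from Corollary~\ref{cor:comparabilityposet} are themselves efficient. There is no substantial obstacle; the only real verification is that these lower-order terms indeed swallow the $O(\log n)$ overhead, which is immediate.
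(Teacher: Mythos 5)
Your argument is correct and is essentially identical to the paper's: the paper also derives this corollary by contracting strong components to reduce to acyclic digraphs (at an $O(\log n)$ overhead per vertex), then identifies reachability in an acyclic digraph with comparability in the induced poset and invokes Corollary~\ref{cor:comparabilityposet}. Nothing further is needed.
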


This improves upon a recent result by Dul{\k{e}}ba, Gawrychowski and Janczewski~\cite{DulekebaGawrychowskiJanczewski}, who proved that digraphs admit an efficient reachability labelling scheme with labels of size $n/3+o(n)$.

\subsubsection*{Induced-universal graphs}

Forgoing all complexity concerns, compact labelling (resp. reachability, comparability) schemes for a class of graphs can be seen from a purely structural perspective, which is that of universal graphs.

Given a graph class $\mathcal{G}$, we say that $\mathcal{G}$ admits \emph{induced-universal graphs} on $f(n)$ vertices if for every $n$, there is a graph on $f(n)$ vertices that contains every $n$-vertex element of $\mathcal{G}$ as an induced subgraph. 

As observed in~\cite{KNR88,KNR92}, admitting universal graphs on $f(n)$ vertices is equivalent to admitting an adjacency labelling scheme using $\log(f(n))$ bits\footnote{Here we need to add the condition that the encoding function is injective, which in general only costs $O(\log n)$ additional bits per vertex and can thus be included in the lower order term.}. 
Indeed, to label an $n$-vertex element of $\mathcal{G}$, it suffices to embed it in the universal graph on $f(n)$ vertices as an induced subgraph, and label its vertices with their corresponding names in the universal graph. Conversely, given an adjacency labelling scheme using $h(n)$ bits, we define a universal graph on $2^{h(n)}$ vertices as follows. Let all possible labels on $h(n)$ bits form its vertex set, and let the edges be defined by the labelling scheme applied to each pair of labels. The resulting graph has $2^{\log (f(n))}$ vertices and contains every $n$-vertex element of $\mathcal{G}$ as induced subgraph.

Theorem~\ref{thm:Hfree} therefore has the following immediate transposition in the realm of induced-universal graphs. 

\begin{corollary}
\label{cor:Hfree-univ}
Let $\mathcal{G}$ be a hereditary graph class. Then for any integer $n$ there is a graph $G_n$ on $2^{\mu_{\mathcal{G}}(n)+o(n)}$ vertices containing all $n$-vertex graphs of $\mathcal{G}$ as induced subgraph. This is optimal up to the lower order term. 
\end{corollary}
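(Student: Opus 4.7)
The plan is to derive the corollary directly from Theorem~\ref{thm:Hfree} via the standard equivalence between adjacency labelling schemes and induced-universal graphs that is recalled in the paragraph immediately preceding the statement. First I would invoke Theorem~\ref{thm:Hfree} to obtain an efficient adjacency labelling scheme for $\mathcal{G}$ of length $h(n) = \mu_{\mathcal{G}}(n) + o(n)$ bits, together with a constant-time decoder $A(x,y)$ that reads two labels and decides adjacency. Since the labelling is not a priori injective on the vertex set of a single host graph, I would append to each label an $O(\log n)$-bit identifier of the vertex within its graph; this keeps the total length at $\mu_{\mathcal{G}}(n) + o(n)$, and it ensures that within any fixed $n$-vertex $H\in\mathcal{G}$ all vertex labels are distinct.

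Next I would define $G_n$ to be the graph with vertex set $\{0,1\}^{h(n)}$ and edge set $\{xy : A(x,y) = 1\}$, so $|V(G_n)| = 2^{h(n)} = 2^{\mu_{\mathcal{G}}(n)+o(n)}$. For any $n$-vertex $H\in\mathcal{G}$, the map sending each vertex to its (now injective) label realises $H$ as an induced subgraph of $G_n$, and this embedding is produced in polynomial time by running the encoder supplied by Theorem~\ref{thm:Hfree}. The graph $G_n$ is explicit in the strong sense that adjacency between any two of its named vertices can be tested in constant time by one call to $A$.

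For the optimality claim, I would argue in the reverse direction: given any graph $G_n$ on $N$ vertices that contains every $n$-vertex member of $\mathcal{G}$ as an induced subgraph, we recover an adjacency labelling scheme for $\mathcal{G}$ using $\lceil \log N \rceil$ bits per vertex by simply storing the identity of each vertex inside $V(G_n)$. Observation~\ref{obs:mu} then forces $\log N \geq \mu_{\mathcal{G}}(n)$, so $|V(G_n)| \geq 2^{\mu_{\mathcal{G}}(n)}$ and our bound is tight up to the $o(n)$ term in the exponent.

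The main obstacle here is not substantive but bookkeeping: one has to make sure that the $O(\log n)$-bit injectivity correction is absorbed into the lower order term and that the word-RAM guarantees from Theorem~\ref{thm:Hfree} (polynomial-time encoding, constant-time decoding) carry over without loss to the claims of explicit construction and efficient embedding. Both points are routine, so the corollary is essentially a reformulation of Theorem~\ref{thm:Hfree} in the language of universal graphs.
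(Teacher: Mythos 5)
Your proposal is correct and follows exactly the route the paper takes: it derives the corollary from Theorem~\ref{thm:Hfree} via the standard labelling-scheme/universal-graph equivalence recalled just before the statement, including the $O(\log n)$-bit injectivity correction (which the paper relegates to a footnote) and the converse direction combined with Observation~\ref{obs:mu} for optimality. Nothing further is needed.
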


\paragraph{Organisation of the paper}
We outline the results from the literature that we will need in Section~\ref{sec:prelims}. We make use of near-optimal dictionaries, for which we state the required results in Section~\ref{subsec:dict}. We prove Theorem \ref{thm:Hfree} in Section~\ref{sec:Hfree}.
In Section~\ref{sec:rl:digraphs} we give a labelling scheme for comparability graphs with a trade-off between the number of bits used for the labels and the decoding time and then deduce Theorem \ref{thm:comparability}.
We conclude with some remarks and open problems in Section~\ref{sec:ccl}.

\section{Preliminaries}
\label{sec:prelims}
In the section, we state the auxiliary results from the literature that we need for the proofs of Theorem \ref{thm:Hfree}.
and Theorem \ref{thm:comparability}.

\subsection{Hereditary Classes}\label{sec:hereditary}
Let $\mathcal{H}(a,b)$ be the set of graphs whose vertex set can be partitioned into $a$ cliques and $b$ independent sets. The \textit{colouring number} $\chi_c(\mathcal{G})\in \N\cup \{\infty\}$ of a hereditary graph class $\mathcal{G}$ is the supremum of the integers $r\in \N$ for which there exist $a,b\in \mathbb{Z}_{\geq 0}$ with $a+b=r$ such that $\mathcal{H}(a,b)\subseteq \mathcal{G}$ (it can easily be checked that $\chi_c(\mathcal{G})$ is finite if and only if $\mathcal{G}$ is not the class of all graphs). The following was proved by Alekseev~\cite{alekseev93} and Bollob\'as and Thomason~\cite{BT95,BT97}.

\begin{theorem}[Alekseev-Bollob\'as-Thomason~\cite{alekseev93,BT95,BT97}]\label{thm:ABT}
Let $\mathcal{G}$ be a hereditary class of graphs with $r=\chi_c(\mathcal{G})\ge 1$. Then $$|\mathcal{G}_n|=2^{(1-1/r+o(1))n^2/2}.$$ 
\end{theorem}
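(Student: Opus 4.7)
The plan is to prove the matching lower and upper bounds separately.

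For the lower bound $|\mathcal{G}_n| \ge 2^{(1-1/r-o(1))n^2/2}$, pick any $a, b \ge 0$ with $a+b = r$ and $\mathcal{H}(a,b) \subseteq \mathcal{G}$, which exist by definition of $\chi_c(\mathcal{G}) \ge r$. Partition $[n]$ into $r$ blocks of nearly equal size, designate $a$ of them as cliques and $b$ as independent sets, and place an arbitrary bipartite graph between each pair of distinct blocks. Every such graph lies in $\mathcal{H}(a,b) \subseteq \mathcal{G}$, and the number of choices for the inter-block edges is $2^{(1-1/r+o(1))n^2/2}$. Since each unlabelled graph is obtained at most $n! = 2^{O(n \log n)} = 2^{o(n^2)}$ times, the lower bound on $|\mathcal{G}_n|$ follows.

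For the upper bound, the plan is to apply Szemerédi's Regularity Lemma. Fix a small $\epsilon>0$ and, given $G \in \mathcal{G}_n$, take an $\epsilon$-regular partition $V_1,\dots,V_k$ with $k=k(\epsilon)$ bounded and $|V_i|\approx n/k$. Classify each $\epsilon$-regular pair $(V_i,V_j)$ as \emph{mixed} if its density lies in $(\epsilon,1-\epsilon)$, and as \emph{pure} otherwise. Let $R$ be the auxiliary graph on $[k]$ whose edges are the mixed pairs. The heart of the argument is the claim that $R$ is $K_{r+1}$-free. Suppose for contradiction that $R$ contains a clique on parts $V_{i_1},\dots,V_{i_{r+1}}$. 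Apply Ramsey's theorem inside each of these (sufficiently large) parts to obtain, in $V_{i_s}$, either a large clique or a large independent set; let $a'$ be the number of parts yielding cliques and $b' = r+1-a'$ the number yielding independent sets. Since $a'+b' = r+1 > \chi_c(\mathcal{G})$, we have $\mathcal{H}(a',b') \not\subseteq \mathcal{G}$, so there is a fixed forbidden graph $H \in \mathcal{H}(a',b')\setminus\mathcal{G}$. Because the pairs among $V_{i_1},\dots,V_{i_{r+1}}$ are $\epsilon$-regular with densities bounded away from $0$ and $1$, the induced version of the counting lemma allows us to embed $H$ as an induced subgraph of $G$ by selecting its vertices from the Ramsey-produced cliques and independent sets, realising exactly the prescribed edge pattern between distinct parts. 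This contradicts $G\in\mathcal{G}$.

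Granted the claim, Turán's theorem gives $|E(R)| \le (1-1/r)\binom{k}{2}$. Each mixed or irregular pair contributes at most $2^{|V_i||V_j|}$ bipartite patterns, while each pure pair contributes at most $2^{H(\epsilon)|V_i||V_j|}$ patterns, where $H(\cdot)$ denotes the binary entropy. The number of possible partitions, regularity classifications, and density patterns is $2^{o(n^2)}$ for bounded $k$. Summing,
\[
|\mathcal{G}_n| \le 2^{(1 - 1/r + H(\epsilon) + \epsilon + o(1))\, n^2/2}.
\]
Letting $\epsilon = \epsilon(n) \to 0$ slowly enough concludes the proof. The main obstacle is the $K_{r+1}$-freeness claim for $R$: the subtle point is that we do not get to choose which role (clique or independent set) each part plays, but having to handle \emph{every} split $(a',b')$ with $a'+b'=r+1$ is precisely what the definition of $\chi_c(\mathcal{G})=r$ allows, since all such $\mathcal{H}(a',b')$ contain a forbidden graph.
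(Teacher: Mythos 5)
The paper does not prove Theorem~\ref{thm:ABT}; it cites it from Alekseev and Bollob\'as--Thomason, so there is no internal proof to compare against. Measured against the standard proof, your outline has the right skeleton: the lower bound via blow-ups of $\mathcal{H}(a,b)$ with $a+b=r$ is correct and complete, and the upper bound correctly reduces to showing that the reduced graph of ``mixed'' regular pairs is $K_{r+1}$-free, after which Tur\'an plus entropy counting over pure and irregular pairs (together with the exceptional set and within-part edges, which you omit but which are routine) gives the bound.

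The genuine gap is in the embedding step establishing $K_{r+1}$-freeness. You apply Ramsey's theorem inside each part $V_{i_s}$ \emph{first}, obtaining homogeneous sets, and then invoke ``the induced counting lemma'' to embed $H$ with vertices chosen from those sets. But a homogeneous set guaranteed by Ramsey in a set of size $N$ has size only $O(\log N)$, far below $\varepsilon |V_{i_s}|$, so the $\varepsilon$-regularity of the pair $(V_{i_s},V_{i_{s'}})$ gives no control whatsoever over the bipartite graph between the two Ramsey sets --- it could be complete or empty, and no counting lemma applies to subsets that small. The correct argument (this is exactly the content of Lemma~\ref{lem:rl:reduced_sparse}, i.e.\ Lemma 10 of Alon--Balogh--Bollob\'as--Morris, which the paper quotes and uses as a black box) reverses the order: one embeds $H$ vertex by vertex using regularity and the intermediate densities, so that every candidate set retains size at least $(\delta-\varepsilon)^{m}|V_{i_s}|$, i.e.\ a positive fraction of the part, and only then applies Ramsey to these still-large candidate sets to settle the within-part clique/independent-set structure. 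It is precisely at that stage that the split $(a',b')$ with $a'+b'=r+1$ is forced on you rather than chosen --- you correctly identify that this is harmless given the definition of $\chi_c$, but your version fixes $(a',b')$ too early and at the cost of destroying regularity. The cleanest repair is simply to cite Lemma~\ref{lem:rl:reduced_sparse} at this point, as the paper itself does in the proof of Theorem~\ref{thm:Hfree}.
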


Note that if a hereditary class $\mathcal{G}$ contains all bipartite graphs or all split graphs, then $\chi_c(\mathcal{G})\ge 2$ and thus $|\mathcal{G}_n|\ge 2^{(1/4+o(1))n^2}$ and $\mu_\mathcal{G}(n)\ge n/4+o(n)$ by Theorem~\ref{thm:ABT}. This applies in particular to the class of comparability graphs, and to any class of graphs excluding some non-bipartite graph (such as $C_5$) as an induced subgraph.

On the other hand, observe that for any $a,b\in \mathbb{Z}_{\geq 0}$ with $a+b=3$, the 5-cycle $C_5$ is contained in $\mathcal{H}(a,b)$. This shows that if $\mathcal{G}$ is the class of graphs with no induced $C_5$, then $\chi_c(\mathcal{G})<3$, and thus $\chi_c(\mathcal{G})=2$ by the paragraph above. By Theorem~\ref{thm:ABT}, and the paragraph above this implies that $|\mathcal{G}_n|=  2^{(1/4+o(1))n^2}$ and $\mu_\mathcal{G}(n)= n/4+o(n)$, and the same applies to the class of comparability graphs.

\subsection{Regularity}

For a graph $G$ and two disjoint subsets of vertices $A, B\subseteq V(G)$, let $e(A,B)$ denote the number of edges of $G$ with an endpoint in $A$ and an endpoint in $B$. If $A$ and $B$ are non-empty, define the \textit{density} of edges between $A$ and $B$ by $d(A, B) = \frac{e(A,B)}{|A||B|}$. For $\varepsilon > 0$, the
pair $(A, B)$ is called \textit{$\varepsilon$-regular} if $|d(A, B) - d(X, Y )| < \varepsilon$ for every $X\subseteq A$ and $Y\subseteq B$ with $|X| \geq \varepsilon |A|$ and $|Y | \geq \varepsilon|B|$. 

An \textit{$\varepsilon$-regular partition} of the set of vertices $V$ of a graph $G$ is a partition of $V$ into pairwise disjoint vertex sets $V_0,\dots,V_t$, for some $t$, such that
\begin{enumerate}
    \item $|V_0|\leq \varepsilon |V|$;
    \item $|V_1|=\dots =|V_t|$;
    \item all but at most $\varepsilon \binom{t}2$ of the pairs $(V_i,V_j)$ for $1\leq i<j\leq t$ are $\varepsilon$-regular.
\end{enumerate}
The fact that such partitions exist with a number of parts depending only on $\varepsilon$ (and not on the size of the graph) is the well-known Szemer\'{e}di
Regularity Lemma. We will need an efficient constructive version of this lemma. 
\begin{lemma}[Algorithmic version of Szemer\'{e}di Regularity Lemma~\cite{AlonDukeLefmannRodlYuster}]
\label{lem:rl:alg_sze}
For every $\varepsilon > 0$ and $t_0\in \N$, there is an integer $N =
N(\varepsilon, t_0)$ such that for every graph $G$ with $n \geq N$ vertices,
 the graph $G$ has an $\varepsilon$-regular
partition $V_0,V_1,\dots,V_t$ where $t_0\leq t \leq N$. Moreover, such a partition
can be found in time $O(n^\omega)$, where $2\le \omega < 2.373$ is the exponent of
matrix multiplication.
\end{lemma}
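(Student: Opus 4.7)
The plan is to follow Szemer\'edi's original proof by energy increment, supplemented with an algorithmic procedure for detecting irregular pairs within the matrix-multiplication budget. For a partition $\mathcal{P}=\{V_0,V_1,\dots,V_t\}$ of $V(G)$, I would work with the \emph{energy} (mean-square density) $q(\mathcal{P})=\sum_{i,j}\tfrac{|V_i|\,|V_j|}{n^2}\,d(V_i,V_j)^2$, which lies in $[0,\tfrac12]$. Starting from an arbitrary equipartition with at least $t_0$ parts, iterate as follows: if the current partition is $\varepsilon$-regular, output it; otherwise, for each irregular pair $(V_i,V_j)$ produce a witness, i.e.\ subsets $X_{ij}\subseteq V_i$ and $Y_{ij}\subseteq V_j$ with $|X_{ij}|\geq \varepsilon|V_i|$, $|Y_{ij}|\geq \varepsilon|V_j|$ and $|d(X_{ij},Y_{ij})-d(V_i,V_j)|\geq \varepsilon$. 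Refine each $V_i$ according to the Boolean atoms of the family $\{X_{ij},Y_{ji}:j\neq i\}$, then rebalance into an equipartition, absorbing leftover vertices into $V_0$. A standard convexity computation shows that $q$ grows by at least some $\delta(\varepsilon)>0$ at each step, so the procedure terminates in at most $1/(2\delta(\varepsilon))$ iterations; the number of parts grows by a factor at most $4^{t}$ per step, which yields the tower-type bound $N(\varepsilon,t_0)$ depending only on $\varepsilon$ and $t_0$.

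The genuinely non-trivial ingredient is producing these witnesses within $O(n^{\omega})$ time, since a naive search over subsets is exponential. Here I would follow the approach of Alon--Duke--Lefmann--R\"odl--Yuster and reduce $\varepsilon$-regularity testing to a spectral/quadratic question on the bipartite adjacency matrix $M$ of the pair $(A,B)$: an appropriately normalised version of $M$ has no large singular value besides the trivial one precisely when $(A,B)$ is essentially regular, while otherwise thresholding a top singular vector produces a bipartite ``box'' $(X,Y)$ witnessing irregularity. Equivalently, one can read off a witness by computing $M^{\top}M$ and examining pairs of vertices in $B$ with an anomalously large number of common neighbours in $A$. Either way, the algebraic workhorse is a matrix product of dimensions $|A|\times|B|$ costing $O((|A|+|B|)^\omega)$; summing over all $\binom{t}{2}$ pairs of parts of the current equipartition still fits within $O(n^\omega)$ per iteration, and since the iteration count depends only on $\varepsilon$, the total runtime is $O(n^\omega)$.

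The main obstacle is precisely this algorithmic regularity test: one must either certify $\varepsilon$-regularity or, when it fails, exhibit a \emph{structured} certificate consisting of two large subsets, all within the matrix-multiplication budget. Everything else---the choice of initial equipartition, the energy accounting for the refinement step, the rebalancing into equal-size parts without destroying too much energy, and the tower-type bound on the number of iterations---follows standard Szemer\'edi-style arguments with only cosmetic adjustments to accommodate the equipartition constraint $|V_1|=\dots=|V_t|$.
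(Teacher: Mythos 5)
This lemma is quoted in the paper directly from Alon--Duke--Lefmann--R\"odl--Yuster and is not proved there, so there is no in-paper argument to compare against; your sketch is, in substance, a faithful reconstruction of the proof in that source (energy increment plus a matrix-multiplication-based search for irregularity witnesses via counts of common neighbours). One caveat: as literally written, your first paragraph asks the algorithm to decide $\varepsilon$-regularity exactly and, for each genuinely irregular pair, to output a witness achieving deviation at least $\varepsilon$ on sets of size at least $\varepsilon|V_i|$; this exact decision problem is co-NP-complete, so no such subroutine is available. What the algorithmic test actually provides is a gap version: it either certifies that the pair is $\varepsilon$-regular or exhibits subsets of size at least $\mathrm{poly}(\varepsilon)\cdot|V_i|$ witnessing a density deviation of at least $\mathrm{poly}(\varepsilon)$ (e.g.\ $\varepsilon^4/16$ and $\varepsilon^4$ in the original paper). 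Your second paragraph's description of the test is consistent with this, but the energy-increment bookkeeping must then be run with the weaker witness, so the increment per iteration is $\mathrm{poly}(\varepsilon)$ rather than $\Omega(\varepsilon^5)$ from the classical proof; since it still depends only on $\varepsilon$, the iteration count and hence the bound $N(\varepsilon,t_0)$ and the $O(n^{\omega})$ total runtime go through unchanged.
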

It should be noted that $N=N(\varepsilon,t_0)$ has a ``terrible'' dependence in $\varepsilon$~\cite{AlonDukeLefmannRodlYuster}: the iterated logarithm $\log^*N$ (the number of times we have to iterate the logarithm to go from
$N$ to 1) is a polynomial in $1/\varepsilon$ (of degree about 20). The dependence of the running time on $\varepsilon$ is upper bounded by $N$ and comes from the number of times the partition is refined.

The edges between pairs of parts of low or high density will be easy for us to label using the dictionaries of the next section, and we can also label the irregular edges since there are not too many of those. These edges will only contribute to the $o(n)$-term.  
The following two results allow us to show that the remaining edges (those between pairs $(V_i,V_j)$ that are both regular and neither dense nor sparse) contribute exactly the leading term we are aiming for. Recall that $\mathcal{H}(a,b)$ was defined in Section~\ref{sec:hereditary}.
\begin{lemma}[Lemma 10 in \cite{AlonBaloghBollobasMorris}]
\label{lem:rl:reduced_sparse}
Given $\delta > 0$ and $m,r \in \N$, there exist an $\varepsilon_0> 0$ and $n_0 \in \N$ 
such that the following holds. 
Let $G$ be a graph with disjoint vertex sets $W_1\dots,W_r$ such that $|W_i|\geq n_0$ for all $i\in [r]$, and the pair $(W_i,W_j)$ is $\varepsilon_0$-regular and of density at least $\delta$ and at most $1-\delta$ for all distinct $i,j\in [r]$. Then there exist $a,b\in \Z_{\geq 0}$ with $a+b=r$ such that $G$ contains all graphs from $\mathcal{H}(a,b)$ of at most $m$ vertices as induced subgraphs.
\end{lemma}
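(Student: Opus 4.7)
The plan is to prove the lemma in three steps: a Ramsey-type classification of the parts $W_k$, an assignment of the blocks of $H$ to these parts, and an application of an induced multipartite counting lemma.

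\textbf{Step 1 (classification).} I would first invoke Ramsey's theorem in a supersaturated form: for $|W_k| \geq n_0 \geq R(m,m)$, a standard double-counting argument (each $R(m,m)$-subset of $W_k$ contributes at least one monochromatic $m$-subset, and each $m$-subset lies in at most $\binom{|W_k|}{R(m,m)-m}$ such $R$-subsets) shows that $G[W_k]$ contains at least $c(m)\cdot|W_k|^m$ induced copies of $K_m$ or $\overline{K_m}$, for some constant $c(m)>0$. By pigeonhole, one of the two types accounts for at least half of them; I would declare $k \in A$ if $K_m$'s dominate and $k \in B$ otherwise, giving $a=|A|$, $b=|B|$ with $a+b=r$. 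The same argument applied at any $m' \leq m$ yields $\geq c(m)|W_k|^{m'}$ copies of the dominant type at size $m'$.

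\textbf{Steps 2 and 3 (assignment and embedding).} Given $H \in \mathcal{H}(a,b)$ with $|V(H)| \leq m$ and its partition $V(H) = X_1 \sqcup \cdots \sqcup X_a \sqcup Y_1 \sqcup \cdots \sqcup Y_b$ into cliques $X_i$ and independent sets $Y_j$, I would bijectively assign the $X_i$'s to distinct parts in $A$ and the $Y_j$'s to distinct parts in $B$. Write $m_k$ for the number of $H$-vertices assigned to $W_k$, and $N_k \geq c(m)|W_k|^{m_k}$ for the count of internal $m_k$-cliques (if $k \in A$) or $m_k$-independent sets (if $k \in B$), given by Step~1. I would then apply an induced multipartite counting lemma for $\varepsilon_0$-regular $r$-partite structures, valid for $\varepsilon_0$ sufficiently small in terms of $\delta$ and $m$: the number of induced copies of $H$ in $G$ respecting the assignment equals, up to a multiplicative factor of $1+o(1)$, the product of the $N_k$'s and the cross factors $d(W_k,W_{k'})^{e_{kk'}}(1-d(W_k,W_{k'}))^{\bar{e}_{kk'}}$ over pairs $k\neq k'$, where $e_{kk'}$ (resp.\ $\bar{e}_{kk'}$) counts the edges (resp.\ non-edges) of $H$ between the parts assigned to $W_k$ and $W_{k'}$. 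Since each $N_k$ is positive (Step~1) and every cross density lies in $[\delta,1-\delta]$, this product is strictly positive, so at least one induced copy of $H$ exists in $G$.

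\textbf{Main obstacle.} The delicate point is the induced multipartite counting lemma itself, since the internal clique/independent-set structure of each $G[W_k]$ is \emph{not} controlled by the regularity of pairs between different $W_k$'s, so internal and cross constraints are not a priori compatible. I would handle this by combining the standard induced counting lemma for regular bipartite pairs (which handles cross structure) with the observation that the monochromatic $m$-subsets in $W_k$ are necessarily ``spread out'': if they all lay inside a subset of $W_k$ of size at most $c(m)^{1/m}\cdot|W_k|$, their total count could not reach $c(m)|W_k|^m$, a contradiction. This spreadness ensures that bipartite regularity between $W_k$ and $W_{k'}$ remains effective when restricted to the internal monochromatic configurations, yielding the required multipartite bound. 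Formalising this simultaneous control of internal and cross structure is the chief technical task.
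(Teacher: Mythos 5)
This lemma is not proved in the paper at all: it is imported verbatim as Lemma~10 of Alon--Balogh--Bollob\'as--Morris, so your attempt has to be judged on its own merits rather than against an internal argument. Your Step~1 (supersaturated Ramsey: at least $\binom{|W_k|}{m}/\binom{R(m,m)}{m}$ homogeneous $m$-sets in each part, hence one type occurs $\Omega(|W_k|^m)$ times) is correct and standard. The problem is that everything difficult has been pushed into the ``induced multipartite counting lemma'' of Step~3, and the spreadness observation you offer does not come close to establishing it. What the argument ultimately requires is an intersection of two kinds of sets inside a single part $W_k$: sets cut out by \emph{internal} adjacencies (e.g.\ the common $G[W_k]$-neighbourhood of the clique vertices already placed in $W_k$, or more globally the support of your family of homogeneous $m$-sets) and sets cut out by \emph{cross} adjacencies (the candidate sets of relative density $\ge \eta(\delta,m)$ produced by regularity). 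Regularity of the pairs $(W_i,W_j)$ says nothing about $G[W_k]$, so these two sets can be disjoint. Concretely, take $G[W_k]$ to be the disjoint union of a clique on half of $W_k$ and an independent set on the other half: your count-based classification declares a single type for $W_k$, yet a candidate set of density $1/2$ landing in the wrong half contains no homogeneous set of that type at all. Knowing that the homogeneous sets are ``not confined to a set of density $c(m)^{1/m}$'' does not prevent them from entirely missing the particular dense subset that the cross-embedding forces you into, and for the same reason the asserted $(1+o(1))$-multiplicative product formula over the families $N_k$ is not something regularity of the cross pairs can deliver.

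The fix is to make the classification \emph{robust under restriction} rather than based on which type dominates in count. Declare $i$ of type $A$ if every subset of $W_i$ of relative density at least $\eta$ contains an $m$-clique; otherwise fix a subset $U_i\subseteq W_i$ with $|U_i|\ge \eta|W_i|$ and no $m$-clique, note that by Ramsey every subset of $U_i$ of size at least $R(m,m)$ contains an independent $m$-set, and \emph{replace} $W_i$ by $U_i$ (an $\varepsilon_0$-regular pair restricted to subsets of relative density $\ge\eta$ stays $\varepsilon_0/\eta$-regular with almost the same density, so the hypotheses survive with adjusted constants). After this preprocessing one embeds $H$ block by block: maintain candidate sets, discard from the current candidate set the $O(m\varepsilon_0|W_k|)$ vertices with atypical degree into the remaining candidate sets, extract a homogeneous set of the guaranteed type from what is left (possible because the remaining set still has relative density $\ge\eta/2$ and the type is robust), and shrink each remaining candidate set by a factor $(\delta-\varepsilon_0)$ per placed vertex. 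Choosing $\eta=(\delta/2)^{m}$, $\varepsilon_0\ll\eta$ and $n_0$ large enough that $\eta^2 n_0\ge R(m,m)$ closes the argument. Without this robustness step (or some genuine substitute for it), your proof has a real gap at its central point.
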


We will also need the following classical result in extremal graph theory.

\begin{theorem}[Tur\'{a}n's theorem \cite{Turan}]
\label{thm:Turan}
Let $r\in \N$. Any $K_{r+1}$-free graph $G$ on $n$ vertices has at most $\left(1-\tfrac1{r}\right)\cdot \tfrac{n^2}2$ edges.
\end{theorem}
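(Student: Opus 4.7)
The plan is to prove Tur\'an's theorem by induction on $n$. The base case $n \le r$ is immediate: any graph on $n \le r$ vertices has at most $\binom{n}{2}$ edges, and the inequality $\binom{n}{2} \le (1-1/r)\cdot n^2/2$ rearranges to $n \le r$, which is exactly the standing hypothesis (with equality when $n=r$).

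For the inductive step with $n > r$, take $G$ to be an $n$-vertex $K_{r+1}$-free graph with the maximum possible number of edges. The key preliminary observation is that $G$ must contain a copy of $K_r$: otherwise one could add any non-edge to $G$ without creating a $K_{r+1}$, contradicting extremality. Fix such an $r$-clique $A\subseteq V(G)$.

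Now partition the edges of $G$ into three groups according to their relationship with $A$: edges inside $A$, edges with exactly one endpoint in $A$, and edges inside $V\setminus A$. Inside $A$ there are exactly $\binom{r}{2}$ edges. Each vertex of $V\setminus A$ is adjacent to at most $r-1$ vertices of $A$ (a vertex with $r$ neighbours in $A$ would close a $K_{r+1}$), so the bipartite contribution is at most $(r-1)(n-r)$. Finally, $G[V\setminus A]$ is itself $K_{r+1}$-free on $n-r$ vertices, so the induction hypothesis gives at most $(1-1/r)(n-r)^2/2$ edges there.

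Summing the three bounds and factoring out $(r-1)/(2r)$ gives
\[
e(G) \le \tfrac{r-1}{2r}\bigl[r^2 + 2r(n-r) + (n-r)^2\bigr] = \tfrac{r-1}{2r}\bigl(r+(n-r)\bigr)^2 = \left(1-\tfrac{1}{r}\right)\cdot \tfrac{n^2}{2},
\]
which closes the induction. There is essentially no obstacle in this argument; the only point requiring care is the algebraic identity at the end, which collapses neatly because the three contributions are exactly the terms of a perfect square. One could alternatively argue via Zykov symmetrization, which has the advantage of identifying the balanced complete $r$-partite graph as the unique extremal example, but this structural refinement is not needed for the inequality stated here.
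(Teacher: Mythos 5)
Your proof is correct. Note, however, that the paper does not prove Theorem~\ref{thm:Turan} at all: it is quoted as a classical result and cited to the literature, so there is no in-paper argument to compare against. Your induction on $n$ (base case $n\le r$; for $n>r$ pass to an edge-maximal $K_{r+1}$-free graph, extract a $K_r$ on a set $A$ by maximality, bound the three edge classes by $\binom{r}{2}$, $(r-1)(n-r)$ and the inductive bound on $G[V\setminus A]$, and collapse the sum into $\tfrac{r-1}{2r}\bigl(r+(n-r)\bigr)^2$) is one of the standard proofs and is complete; the only implicit points — that a non-edge exists when $n>r$ (a complete graph would contain $K_{r+1}$), and that the bound for an edge-maximal graph suffices for all $K_{r+1}$-free graphs — are immediate. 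This weaker form (without identifying the Tur\'an graph as extremal) is all the paper uses, in the proof of Claim~\ref{cl:grey}.
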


\subsection{Orientations and Bipartite Graphs}

An \emph{orientation} of an undirected graph $G$ is a directed graph obtained from $G$ by choosing one of the two possible directions for each  edge of 
$G$. We use the following result to efficiently label adjacencies in graphs that are not very dense.
\begin{lemma}[Lemma 3.1 in \cite{AlonTarsi}]
\label{lem:orientations_mad}
A graph $G = (V, E)$ has an orientation in which every out-degree is at most $d$ if and only if 
$\max_{H\subseteq G}\frac{|E(H)|}{|V(H)|}\leq d.$
\end{lemma}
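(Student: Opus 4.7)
The plan is to prove the two directions separately. The \emph{only if} direction is a simple double counting: given an orientation of $G$ with all out-degrees at most $d$ and any subgraph $H\subseteq G$, every edge of $H$ has both endpoints in $V(H)$ and so contributes exactly $1$ to the out-degree (in the fixed orientation of $G$) of its tail, which lies in $V(H)$. Summing over $V(H)$ gives $|E(H)|\le \sum_{v\in V(H)} d^+(v)\le d\cdot |V(H)|$, hence $|E(H)|/|V(H)|\le d$ for every nonempty $H\subseteq G$.

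For the \emph{if} direction, I would recast the existence of a suitable orientation as a bipartite matching problem. Construct a bipartite graph $B$ with one part equal to $E(G)$ and the other part consisting of $d$ ``slots'' for every $v\in V(G)$, where $e\in E(G)$ is adjacent in $B$ to every slot of each endpoint of $e$. An orientation of $G$ with all out-degrees at most $d$ corresponds exactly to a matching in $B$ that saturates $E(G)$: if the edge $e=uv$ is matched to a slot of $u$, orient $e$ as $u\to v$, and the slot-capacity $d$ at $u$ exactly enforces the out-degree bound $d^+(u)\le d$.

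By Hall's theorem, such a matching exists if and only if every $S\subseteq E(G)$ satisfies $|N_B(S)|\ge |S|$. Writing $V(S):=\bigcup_{e\in S} e$ for the set of vertices incident to edges in $S$, one has $|N_B(S)|=d\cdot|V(S)|$, so Hall's condition becomes $|S|\le d\cdot|V(S)|$ for every $S\subseteq E(G)$. This is precisely the statement that the subgraph $H=(V(S),S)$ of $G$ satisfies $|E(H)|/|V(H)|\le d$, which is immediate from the hypothesis. The only thing to be careful with is that the hypothesis ranges over \emph{all} subgraphs of $G$ (including non-induced ones), so the witness subgraphs $(V(S),S)$ arising in the Hall verification are legitimate. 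I do not foresee any real obstacle: the heart of the argument is just recognising that the Alon--Tarsi orientation condition is a disguised Hall condition.
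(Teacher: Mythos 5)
Your proof is correct. The paper does not actually prove this statement---it is imported verbatim as Lemma~3.1 of Alon and Tarsi---and your argument (the easy double count for necessity, and Hall's theorem on the edge--slot incidence bigraph for sufficiency) is essentially the standard proof of that result. One small point worth making explicit: the construction with ``$d$ slots per vertex'' presumes $d$ is a nonnegative integer, and this is indeed the only setting in which the statement is literally true --- for $G=K_4$ and $d=3/2$ the density condition $\max_H |E(H)|/|V(H)|=3/2\le d$ holds, yet every orientation has a vertex of out-degree at least $\lceil 6/4\rceil=2$ --- so your implicit integrality assumption is the correct reading of the lemma rather than a gap.
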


For the results of Section \ref{sec:rl:digraphs} we also need an optimal adjacency labelling scheme for bipartite
graphs from~\cite{DulekebaGawrychowskiJanczewski}.
A scheme with labels of the same order, but with a better lower order term can also be deduced
from~\cite{Alon17} or \cite{AlstrupKaplanThorupZwick}, but without an explicit decoder.
\begin{lemma}[Theorem 3.2 in~\cite{DulekebaGawrychowskiJanczewski}]\label{lem:bipartite}
The class of  bipartite graphs has an adjacency labelling scheme with
labels of size at most $n/4+10 \log n$. The labelling can be
constructed in time $O(n^2)$ and adjacency queries can be answered in
constant time in the word RAM model with words of size $\Theta(\log n)$. In fact, the decoding scheme
inspects at most ten $\lceil \log n \rceil$-bit words.
\end{lemma}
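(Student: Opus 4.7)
The plan is to show that the adjacency matrix of a bipartite graph $G$ with parts $A, B$ of sizes $a, b$ (with $a + b = n$, and WLOG $a \le b$ after swapping) has exactly $ab \le n^2/4$ bits, and can be canonically distributed among the $n$ vertices so that each vertex owns at most $\lceil n/4\rceil + O(1)$ payload bits; an $O(\log n)$ header then stores identifiers and part sizes, and decoding reduces to $O(1)$ arithmetic on $O(\log n)$-bit words.

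Identify $A$ with $[a]$ and $B$ with $[b]$. For each $i \in [a]$, let $S_i \subseteq [b]$ be the cyclic window of length $w := \lceil n/4\rceil$ starting at position $\lfloor ib/a \rfloor$, and assign the adjacency bit of $(u_i, v_j)$ to $u_i$ if $j \in S_i$, otherwise to $v_j$. By construction $u_i$ stores $|S_i| = w$ bits. Setting $T_j := \{i \in [a] : j \notin S_i\}$, the map $i \mapsto \lfloor ib/a\rfloor$ traces an approximate arithmetic progression of step $b/a$ through $[b]$, so for every $j$ it enters any cyclic window of length $w$ approximately $wa/b = an/(4b) + O(1)$ times, giving
\[
|T_j| \;=\; a - \frac{an}{4b} + O(1) \;=\; \frac{a(3b-a)}{4b} + O(1).
\]
The inequality $a(3b-a)/(4b) \le n/4$ rearranges to $(a-b)^2 \ge 0$, tight at $a = b$, so every vertex owns at most $n/4 + O(1)$ payload bits.

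The label of $u_i$ is the concatenation of a header $(i, a, \text{part-flag})$ with the length-$w$ bitstring whose $k$th bit records the adjacency of $u_i$ to $v_{(\lfloor ib/a\rfloor + k)\bmod b}$; the label of $v_j$ is a header $(j, b, \text{part-flag})$ followed by the adjacencies to $u_i$ for $i \in T_j$, listed in increasing order. Headers use at most $3 \lceil \log n\rceil + O(1)$ bits, so labels have total length $n/4 + O(\log n) \le n/4 + 10 \log n$ for $n$ sufficiently large. To decode two labels in opposite parts, read the two headers, test the owner condition $(j - \lfloor ib/a\rfloor)\bmod b < w$ by constant-time arithmetic, and then either read bit $k := (j - \lfloor ib/a\rfloor)\bmod b$ of $u_i$'s payload, or compute the rank of $i$ in $T_j$ in $O(1)$ from $i, j, a, b$ (using that $T_j$ is the complement of a predictable set of preimages under the proportional shift) and read the corresponding bit of $v_j$'s payload. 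Each payload is stored in $O(n/\log n)$ machine words, so the required bit is extracted with a single word load and bit-shift; a careful accounting bounds the total number of $\lceil\log n\rceil$-bit word inspections by ten.

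The main technical hurdle is the uniform upper bound on $|T_j|$. A naive constant shift, taking all $S_i$ to be translates $\{i, \ldots, i + w - 1\}\bmod b$, causes $|T_j|$ to range up to $\min(a, b-w)$, which already exceeds $n/4$ in the regime $a \in (n/4, n/2)$. The proportional shift $\lfloor ib/a\rfloor$ is the key device that spreads the windows $S_i$ evenly over $[b]$, and the algebraic identity $3ab - a^2 \le ab + b^2$ converts this uniform coverage into the sharp $n/4$ bound. The remaining issues (rounding, the degenerate case $b \le w$, and the part-flag bit) contribute only to the $O(\log n)$ lower-order term, and encoding all labels is a single $O(n^2)$ pass over the adjacency matrix.
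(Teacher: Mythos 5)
The paper offers no proof of this lemma: it is imported verbatim as Theorem~3.2 of Dul\k{e}ba, Gawrychowski and Janczewski, so there is no in-paper argument to compare against. Your construction is a correct self-contained proof of the statement, and it is in the same spirit as the cited one (split the $ab\le n^2/4$ bits of the bipartite adjacency matrix between the two sides so that each vertex owns roughly $n/4$ of them). The core of your argument is sound: with $a\le b$ the windows $S_i$ of length $w=\lceil n/4\rceil$ always fit since $b\ge n/2\ge w$ (so your ``degenerate case $b\le w$'' never actually occurs); the proportionally shifted starting points $\lfloor ib/a\rfloor$ are equidistributed, giving $|T_j|=a(3b-a)/(4b)+O(1)$; and the inequality $a(3b-a)/(4b)\le(a+b)/4$ is exactly $(a-b)^2\ge 0$. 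This balancing step is the real content, and you identify correctly why a constant shift fails. Two points are asserted rather than proved, though both are routine: (i) the $O(1)$-time rank query inside $T_j$ --- this does work, because $\{i: j\in S_i\}$ is the preimage under the monotone map $i\mapsto\lfloor ib/a\rfloor$ of at most two linear intervals, hence a union of at most two integer intervals with arithmetically computable endpoints, so $T_j$ is a union of at most three such intervals and ranks are computable by $O(1)$ arithmetic; and (ii) the claim that exactly ten $\lceil\log n\rceil$-bit words suffice, which you defer to ``careful accounting''. Also note that the header costs only $2\lceil\log n\rceil+O(1)$ bits and the payload overshoot is $O(1)$, so the bound $n/4+10\log n$ in fact holds for all $n\ge 4$, not merely for $n$ sufficiently large.
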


\subsection{Succinct Dictionaries for Neighbourhoods}
\label{subsec:dict}
Throughout this paper, $\log$ stands for the logarithm to the base 2,
and the natural logarithm is denoted by $\ln$. 

We will need an encoding of subsets $S$ of $[n]=\{1,\dots,n\}$ from which we can answer membership queries such as ``$x\in S$?'' efficiently.
It is easy to encode a subset of size at most $k$ from $[n]$ by using
$(k+1)\lceil \log n\rceil$ bits to write down the size of the subset
and then the elements in increasing order (each element in base 2, as a $\lceil \log
n\rceil$-bit word). Membership queries can then be performed using binary search. This gives the following folklore result which will suffice for the proof of Theorem \ref{thm:trade_off}. 
\begin{theorem}[Folklore dictionaries] \label{thm:dictionaries}
  For any integer $n\ge 2$ and $k\in [n]$, any subset of $[n]$ of size
  at most $k$ can be encoded in time $O(k\log n)$ using at
  most $(k+1)\lceil \log n\rceil \le 4k\log n$ bits of storage, such that
  membership queries can be answered by inspecting at most $4\log
  k\cdot \log n$ bits.
\end{theorem}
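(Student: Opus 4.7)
The plan is the standard one suggested by the bound itself: represent a subset $S\subseteq [n]$ of size $k'\le k$ by writing, in a fixed word width $w=\lceil\log n\rceil$, first the integer $k'$ and then the elements of $S$ sorted in increasing order; then answer membership queries by binary searching the sorted tail of this encoding.

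First I would verify the space bound. The encoding uses exactly $(k'+1)w$ bits, which is at most $(k+1)\lceil\log n\rceil$, and since $\lceil\log n\rceil\le 2\log n$ for $n\ge 2$ and $k+1\le 2k$ for $k\ge 1$, this is bounded by $4k\log n$. The encoding time is $O(k\log n)$: one only needs to sort at most $k$ integers of at most $w$ bits each (which takes $O(k\log k)$ word operations in the word-RAM model, hence $O(k\log n)$) and then copy them into the output buffer.

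Next I would handle decoding. On a query ``$x\in S$?'', the decoder reads the initial $w$-bit header to recover $k'$, and then performs standard binary search for $x$ within the $k'$ sorted $w$-bit blocks that follow, inspecting one full word at each step. The number of inspected words is at most $1+\lceil\log k'\rceil\le 1+\lceil\log k\rceil$, and each word is $\lceil\log n\rceil\le 2\log n$ bits; using $\lceil\log k\rceil\le 2\log k$ when $k\ge 2$ gives the claimed bound of $4\log k\cdot\log n$ inspected bits (the tiny cases $k\in\{1,2\}$ are absorbed into the constant or handled by a direct comparison). No real obstacle arises in the proof; the only care needed is to keep the bit accounting consistent through the various ceilings, and to notice that the length header is necessary so that the decoder knows where the sorted list ends, but contributes only a single extra word read and therefore does not affect the asymptotic bound.
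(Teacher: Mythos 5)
Your proof is correct and follows exactly the same approach the paper sketches just before the theorem statement: store the size followed by the elements in increasing order as $\lceil\log n\rceil$-bit words, and answer queries by binary search. Your bit accounting and the remark about the degenerate case $k=1$ (where the stated query bound $4\log k\cdot\log n$ is vacuous) are more careful than the paper's one-line justification, but the argument is the same.
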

A more efficient scheme has been introduced by Fredman, Koml\'{o}s and
Szemer\'{e}di \cite{FredmanKomlosSzemeredi}, but this would not significantly impact the asymptotics of our decoding time in the proof of Theorem \ref{thm:trade_off} and we therefore opt to use the simplest solution. 

For our other results, we will use a scheme that encodes subsets using a number of bits close to the information-theoretic minimum, while also requiring only constant query time in the classical \textit{word RAM model}~\cite{wordram}. In this model we have access to an array whose cells contain $w$-bit words (or equivalently integers in the interval $[2^w]$). Each word can be accessed in constant time. Moreover, usual arithmetic operations between integers in the interval $[2^w]$ and bitwise logical operations between $w$-bit strings can be performed in constant time as well. The constant decoding time for Theorem \ref{thm:Hfree} and Theorem \ref{thm:constant} are all measured in this model as well as the constant decoding time from the earlier work of Dul{\k{e}}ba, Gawrychowski and Janczewski~\cite{DulekebaGawrychowskiJanczewski} on reachability labelling schemes.

The information-theoretic minimum for encoding subsets of $[n]$ of
size $k=k(n)$ is $\lceil \log \binom{n}{k}\rceil$ bits. Let $H(p)=-p\log(p)-(1-p)\log(1-p)$ denote
the binary entropy function. Then $\log \binom{n}{k} \leq  H(k/n)\cdot
n$ where $H(p)\to 0$ as $p\to 0$. Theorem 1.1 in \cite{pagh}
shows that subsets of size $k$ from $[n]$ can be encoded at the information-theoretic minimum up to lower order terms with constant membership query time in the word RAM model. By adding at most $m$ elements to the universe $[n]$, we can complete all sets of at most $m$ elements of $[n]$ into $m$-element subsets of $[n+m]$.
We obtain the following corollary. 
\begin{theorem}[Efficient static dictionaries \cite{pagh}]
\label{thm:dictionaries_opt}
For any integer $n\geq 2$ and $k\in [n]$, subsets of size at most $k$
from $[n]$ can be encoded in time $O(n^3)$ using at most
\[
2H(k/n) \cdot n + O\left(\frac{k(\log\log k)^2}{\log k}+\log\log n\right)
\]
bits of storage, with constant membership query time in the word RAM model with word size
$\Theta(\log n)$.
\end{theorem}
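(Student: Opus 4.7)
The plan is to reduce the at-most-$k$ case to the exactly-$k$ case and invoke Theorem~1.1 of \cite{pagh} as a black box. That theorem supplies a static dictionary for subsets of fixed size $k$ from a universe $[m]$ using $\lceil\log\binom{m}{k}\rceil + O\bigl(k(\log\log k)^2/\log k + \log\log m\bigr)$ bits, polynomial encoding time, and constant-time membership queries in the word RAM model with $\Theta(\log m)$-bit words.

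The reduction is the padding trick sketched in the excerpt. Given $S\subseteq [n]$ with $|S|\le k$, I would enlarge the universe to $[n+k]$ by reserving $k$ \emph{dummy} elements $n+1,\dots,n+k$ and form $S'\subseteq [n+k]$ by appending to $S$ the first $k-|S|$ dummies, so that $|S'|=k$ exactly and $S'\cap[n]=S$. Applying Pagh's dictionary to $S'$ over universe $[n+k]$ at size exactly $k$ yields the encoding. Any membership query ``$x\in S$?'' with $x\in[n]$ is answered by querying ``$x\in S'$?'' in Pagh's structure; since every dummy lies outside $[n]$, the two answers agree, and the query runs in constant time in the word RAM model.

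For the bit count, Pagh's bound at $m=n+k\le 2n$ gives $\log\binom{n+k}{k}+O\bigl(k(\log\log k)^2/\log k + \log\log n\bigr)$, absorbing $\log\log(n+k)=O(\log\log n)$. The leading term is then controlled via the standard entropy estimate $\log\binom{M}{k}\le M\cdot H(k/M)$ combined with monotonicity of $H$ on $[0,1/2]$ and $k/(n+k)\le k/n$: in the informative regime $k\le n/2$ this yields $\log\binom{n+k}{k}\le 2n\cdot H(k/n)$, while for larger $k$ one may simply write $S$ out in $n$ bits, which falls inside the stated bound. The encoding time inherits from Pagh's construction and is polynomial in $n$, comfortably within $O(n^3)$.

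The genuine work lives inside Pagh's construction, which I would invoke as a black box; the reduction above is essentially bookkeeping. The only point of care is that the padding alphabet be chosen disjoint from $[n]$ so that dummies cannot collide with genuine elements of $S$, which is arranged simply by placing all dummies in the tail $\{n+1,\dots,n+k\}$.
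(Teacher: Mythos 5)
Your proposal is correct and matches the paper's own (very brief) justification: the paper likewise pads sets of size at most $k$ up to exactly $k$ elements by enlarging the universe to $[n+k]$, invokes Pagh's Theorem~1.1 as a black box, and absorbs the universe growth into the factor $2$ via $\log\binom{n+k}{k}\le (n+k)H(k/(n+k))\le 2nH(k/n)$. The only caveat, which applies equally to the paper's statement, is that this last inequality genuinely needs $k/n\le 1/2$ (your fallback of writing $S$ as an $n$-bit bitmap does not fit under $2H(k/n)n$ when $k/n$ is close to $1$), but the result is only ever applied with $k=\varepsilon n$ for small $\varepsilon$, so this does not affect anything downstream.
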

A similar result was obtained by Brodnik and Munro
in~\cite{BM97}, with a slightly worse lower order term and no explicit
analysis of the time complexity of the encoding.

Our main use of Theorem \ref{thm:dictionaries_opt} is in the following form.
\begin{corollary}
\label{cor:dict_opt}
Let $\varepsilon>0$ be given. For every $n$-vertex graph with an orientation on the vertices 
in which each vertex has out-degree at most $\varepsilon n$, we can encode the adjacencies of the graph in time $O(n^3)$ using labels of at most $2H(\varepsilon) n + O\left(n(\log \log n)^2/\log n\right)$ bits per vertex, with constant adjacency query time in the word RAM model with word size
$\Theta(\log n)$.
\end{corollary}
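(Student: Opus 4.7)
The plan is to use Theorem~\ref{thm:dictionaries_opt} as a black box, one dictionary per vertex, to store each vertex's out-neighbourhood in the given orientation. Without loss of generality assume $\varepsilon\le 1/2$ (otherwise $2H(\varepsilon)n\ge n$ and the trivial scheme that stores, at vertex $v$, the $v$-th row of the adjacency matrix already satisfies the bound). Fix an arbitrary bijection between $V(G)$ and $[n]$ and, for each vertex $v$, let $N^+(v)\subseteq[n]$ be its set of out-neighbours, which has size at most $\lceil\varepsilon n\rceil$.

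First, I would define the label of a vertex $v$ to be the concatenation of (i) its own index $\mathrm{id}(v)\in[n]$, written in $\lceil\log n\rceil$ bits, and (ii) the dictionary encoding of $N^+(v)$ produced by Theorem~\ref{thm:dictionaries_opt} with parameters $n$ and $k=\lceil\varepsilon n\rceil$. Note that the same choice of $k$ is used for every vertex (so one does not need to transmit $k$ inside the label). The decoder, given labels of $u$ and $v$, answers ``adjacent'' if and only if $\mathrm{id}(v)$ belongs to $u$'s dictionary \emph{or} $\mathrm{id}(u)$ belongs to $v$'s dictionary. Each of the two membership tests runs in constant time in the word RAM model by Theorem~\ref{thm:dictionaries_opt}, and correctness follows because an edge $uv$ is oriented either from $u$ to $v$ or from $v$ to $u$ in the given orientation.

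Second, I would verify the size bound. Plugging $k=\lceil \varepsilon n\rceil$ into Theorem~\ref{thm:dictionaries_opt} and using the fact that $H$ is increasing on $[0,1/2]$ (so $H(k/n)\le H(\varepsilon)+O(1/n)$), the dictionary part contributes at most
\[
2H(\varepsilon)\, n + O\!\left(\frac{\varepsilon n(\log\log(\varepsilon n))^2}{\log(\varepsilon n)}+\log\log n\right)
= 2H(\varepsilon)\, n + O\!\left(\frac{n(\log\log n)^2}{\log n}\right)
\]
bits, where $\varepsilon$ is treated as a constant in the asymptotics. Adding $\lceil\log n\rceil$ bits for the vertex identifier is absorbed into the lower-order term. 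The encoding time is obtained by running the encoder of Theorem~\ref{thm:dictionaries_opt} once per vertex, which is polynomial in $n$.

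There is no real obstacle here: the statement is essentially a packaging of Theorem~\ref{thm:dictionaries_opt} in graph-theoretic language. The only points requiring a small amount of care are (a) ensuring that one can assume $\varepsilon\le 1/2$ so that $H(\varepsilon)$ is monotone in the relevant range, (b) checking that the $\varepsilon$ factor inside $O(\varepsilon n(\log\log n)^2/\log n)$ can be absorbed into the $O(\cdot)$ since $\varepsilon$ is fixed, and (c) observing that the identifier $\mathrm{id}(v)$ only needs $O(\log n)$ bits and thus does not affect the leading term.
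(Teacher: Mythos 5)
Your proposal is correct and is essentially identical to the paper's own justification of the corollary: number the vertices by $[n]$, store each vertex's out-neighbourhood in a dictionary from Theorem~\ref{thm:dictionaries_opt} with $k=\lceil\varepsilon n\rceil$, and answer an adjacency query by two constant-time membership tests (one in each direction). The only quibble is that your reduction to $\varepsilon\le 1/2$ is both unnecessary — for $\varepsilon>1/2$ the function $H$ is decreasing, so $H(k/n)\le H(\varepsilon)$ holds for the ceiling anyway — and not quite right as stated, since $2H(\varepsilon)<1$ for $\varepsilon$ sufficiently close to $1$, so the trivial row-of-the-adjacency-matrix scheme does not meet the bound there; but nothing in your argument actually depends on this case split.
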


To see how this follows from Theorem \ref{thm:dictionaries_opt}, note that we can number the vertices of the graph with the elements of $[n]$ and can then let each vertex store the set of its out-neighbours. Deciding whether two vertices $u$ and $v$ are adjacent is equivalent to deciding whether $u$ lies in the out-neighbourhood of $v$ or $v$ lies in the out-neighbourhood of $u$.

\section{Adjacency Labelling in Hereditary Classes}
\label{sec:Hfree}
We show that, for every hereditary class of graphs $\mathcal{G}$, there is an adjacency labelling scheme using $\left(1-\chi_c(\mathcal{G})^{-1}+o(1)\right)\cdot \tfrac{n}2$ bits per vertex. This implies Theorem~\ref{thm:Hfree} by Theorem~\ref{thm:ABT} and Observation~\ref{obs:mu}.
\begin{proof}[Proof of Theorem \ref{thm:Hfree}]
Let $\mathcal{G}$ be a hereditary graph class, with $r=\chi_c(\mathcal{G})\ge 1$. We can assume that $r$ is finite, by using the simple adjacency labelling scheme of Moon~\cite{moon65} with labels of at most  $\tfrac{n}2+\log n$ bits if $\mathcal{G}$ is the class of all graphs (see also~\cite{Alon17,AlstrupKaplanThorupZwick} for more recent work on this case). We will show that there exists a continuous function $f:\R_{>0}\to \R$ such that $f(x)\to 0$ as $x\to 0$ and a function $n_2:\R_{>0}\to \N$, such that for all $\delta\in (0,1)$, for all $G\in \mathcal{G}$ on $n\geq n_2(\delta)$ vertices, we can construct labels of at most $(1-\tfrac1{r}+f(\delta))\cdot \tfrac{n}2$ bits per vertex in time $O(n^3)$, such that it can determined from the labels of two vertices whether they are adjacent.

By definition of the colouring number, for all $a,b\in \Z_{\geq 0}$ such that $a+b=r+1$, there exists a graph $H_{a,b}\in \mathcal{H}(a,b)$ such that $H_{a,b}\not \in \mathcal{G}$. Let $m=\max|V(H_{a,b})|$, where the maximum is taken over the (finite number of) $a,b\in \Z_{\geq 0}$ such that $a+b=r+1$. 

Let $\delta\in (0,1)$ be given and let $\varepsilon_0$ and $n_0$ be given from Lemma \ref{lem:rl:reduced_sparse} applied for the $\delta$ and $m$ defined above and $r=\chi_c(\mathcal{G})$. We set $\varepsilon = \min\{\varepsilon_0,\delta\}$ and $t_0=\lceil \delta^{-1}\rceil$. Let $N=N(\varepsilon,t_0)$ be given from Lemma \ref{lem:rl:alg_sze}. We set $n_1=n_0(1-\varepsilon)^{-1}N$. 

Let $G\in \mathcal{G}$ be an $n$-vertex graph with $n\geq n_1$. We apply Lemma \ref{lem:rl:alg_sze} to $G$ to find an $\varepsilon$-regular partition $(V_0,\dots,V_t)$ of $V$ in time $O(n^\omega)$ with $t_0\leq t+1\leq N$ parts. Note that $N=N(\varepsilon,t_0)$ depends on $\delta$ and $\mathcal{G}$ but does not depend on $G$. By definition of $n_1$ and $t_0$ we find $n_0\leq |V_i|\leq \delta n$ for all $i\in [t]$.
We define a 4-colouring $c$ of the edges of the complete graph $K_t$ with vertex set $[t]$, as follows. 
\begin{itemize}
    \item We colour the edge $ij$ red if $(V_i,V_j)$ is not $\varepsilon$-regular.
    \item We colour the edge $ij$ black if $d(V_i,V_j)>1-\delta$ and $(V_i,V_j)$ is $\varepsilon$-regular.
    \item We colour the edge $ij$ white if $d(V_i,V_j)<\delta$ and $(V_i,V_j)$ is $\varepsilon$-regular.
    \item We colour the edge $ij$ grey if $\delta \leq d(V_i,V_j)\leq 1-\delta$ and $(V_i,V_j)$ is $\varepsilon$-regular.
\end{itemize}
Using a constant number of bits, we store this auxiliary graph of
constant size (including the colour of the edges) in the label of each vertex. We also store the
size of the corresponding parts of $G$ (note that it suffices to store
$|V_0|$ and $n$ since $|V_1|=\dots=|V_t|$, which takes at most
$2\lceil \log n\rceil$ bits).  Recall that $|V_i|\leq \delta n$ for all $i\in\{0,\dots,t\}$. We order the vertices within their own part arbitrarily. For each vertex $v\in G$, we record the index $i$ of the part $V_i$ containing $v$,  as well as the position of $v$ in the order on $V_i$. We then allocate a further $\lfloor \delta n\rfloor$ bits for each vertex, where the $j$-th bit encodes whether $v$ is adjacent to the $j$-th vertex of its part. Furthermore, we allocate an additional $\lfloor \delta n\rfloor$ bits to record the adjacencies from each vertex to the part $V_0$. 

It now remains to encode the adjacencies between $V_i$ and $V_j$ for $1\leq i< j\leq t$. For this, we apply different labelling schemes depending on the colour of the edge $ij$.

\medskip
We first consider the grey edges. 
\begin{claim}
\label{cl:grey}
The subgraph $Y$ of $K_t$ induced by the grey edges can be oriented so
that each vertex in $Y$ has out-degree at most $\left(1-\tfrac1{r} \right)\cdot \tfrac{t}2$.
\end{claim}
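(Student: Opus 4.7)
My plan is to show that the grey-edge graph $Y$ is $K_{r+1}$-free, then apply Turán and the orientation lemma.

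\textbf{Step 1: No large clique in $Y$.} I would argue by contradiction. Suppose $Y$ contains a clique on $r+1$ vertices, say indexed by $i_1, \dots, i_{r+1}$. Then the parts $W_s = V_{i_s}$ are pairwise $\varepsilon$-regular (hence $\varepsilon_0$-regular, since $\varepsilon \le \varepsilon_0$) and have pairwise densities in $[\delta, 1-\delta]$. Moreover $|W_s| \ge n_0$ by the choice of $n_1$. So Lemma~\ref{lem:rl:reduced_sparse} applies with these $r+1$ parts, and it yields $a,b \in \Z_{\ge 0}$ with $a+b = r+1$ such that $G$ contains every graph of $\mathcal{H}(a,b)$ on at most $m$ vertices as an induced subgraph. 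But by our choice of $m$, the forbidden graph $H_{a,b}$ has at most $m$ vertices and belongs to $\mathcal{H}(a,b)$, so $G$ would contain $H_{a,b}$ as an induced subgraph. Since $\mathcal{G}$ is hereditary and $H_{a,b} \notin \mathcal{G}$, this contradicts $G \in \mathcal{G}$.

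\textbf{Step 2: Bounded maximum average density.} Now $Y$ is $K_{r+1}$-free, and so is every subgraph $H \subseteq Y$. Applying Turán's theorem (Theorem~\ref{thm:Turan}) to any subgraph $H$ on $s \le t$ vertices gives
\[
\frac{|E(H)|}{|V(H)|} \;\le\; \Bigl(1-\tfrac{1}{r}\Bigr)\cdot \frac{s}{2} \;\le\; \Bigl(1-\tfrac{1}{r}\Bigr)\cdot \frac{t}{2}.
\]

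\textbf{Step 3: Apply the orientation lemma.} By Lemma~\ref{lem:orientations_mad} with $d = \lfloor (1-1/r) \cdot t/2 \rfloor$ (or just the real value $(1-1/r)\cdot t/2$, since out-degrees are integers and the bound is meant in this sense), $Y$ admits an orientation in which every vertex has out-degree at most $(1-1/r)\cdot t/2$, proving the claim.

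The only subtle point I expect is correctly matching the hypotheses of Lemma~\ref{lem:rl:reduced_sparse} in Step 1 — specifically, verifying that the regularity parameter $\varepsilon$ and the part sizes chosen earlier (with $\varepsilon \le \varepsilon_0$ and $|V_i| \ge n_0$) are indeed tight enough to invoke the lemma on any $r+1$ parts forming a grey clique; this is precisely why $n_1$ and $\varepsilon$ were set up the way they were at the beginning of the proof. The rest is a clean Turán-plus-Hakimi-style argument.
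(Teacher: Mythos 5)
Your proof is correct and follows essentially the same route as the paper's: show $Y$ is $K_{r+1}$-free by combining a grey clique with Lemma~\ref{lem:rl:reduced_sparse} to produce a forbidden $H_{a,b}$ as an induced subgraph, then apply Tur\'an's theorem to every subgraph and conclude via Lemma~\ref{lem:orientations_mad}. Your extra care in checking the density and part-size hypotheses of Lemma~\ref{lem:rl:reduced_sparse} is welcome but does not change the argument.
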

\begin{proof}
Suppose that $Y$ contains a copy of $K_{r+1}$ induced on $i_1,\dots,i_{r+1}$ and set $W_j=V_{i_j}$ for $j\in [r+1]$. Since $\varepsilon\leq \varepsilon_0$, the pairs $(W_i,W_j)$ are $\varepsilon_0$-regular for all distinct $i,j\in [r+1]$. By Lemma \ref{lem:rl:reduced_sparse}, we find that there exist $a,b\in \Z_{\geq 0}$ with $a+b=r+1$ such that $G$ contains an induced copy of every graph in $\mathcal{H}(a,b)$ on at most $m$ vertices. In particular $G$ contains an induced copy of $H_{a,b}$. Since $\mathcal{G}$ is hereditary, we find $H_{a,b}\in \mathcal{G}$, a contradiction.

Hence $Y$ is $K_{r+1}$-free. Tur\'{a}n's theorem (Theorem \ref{thm:Turan}) shows that for every subgraph $Y'$ of $Y$,
$|E(Y')|\leq \left(1-\tfrac1{r}\right)|V(Y')|^2/2$. The claim now follows from Lemma  \ref{lem:orientations_mad}.
\end{proof}
Let $T$ be an orientation of the edges of $Y$ such that every vertex has out-degree at most $\left(1-\tfrac1{r} \right)\tfrac{t}2$. We now encode the corresponding adjacencies in $G$ as follows. For each vertex $v\in V_i$ (with $i\in [t]$), we encode the set of elements $j\in [t]$ for which $i$ is oriented from $i$ to $j$ in $T$. We then encode the adjacencies between $v\in V_i$ and $V_j$ naively by appending a bit string of length $|V_j|\le n/t$ to its label, where (as before) the $k$-th bit in the string indicates whether or not $v$ is adjacent to the $k$-th vertex of $V_j$. By the bound on the out-degrees in the orientation $T$ of $Y$, this adds a total of at most 
\[
\left(1-\frac1{r}\right)\cdot \frac{t}2\cdot \frac{n}{t}=\left(1-\frac1{r}\right)\frac{n}2
\]
bits to each label, plus a constant number of bits for storing the corresponding
out-neighbourhood of $i$ in $T$.

\medskip

We now take care of the red edges.
\begin{claim}
The subgraph $R$ of $K_t$ induced by the red edges can be oriented so
that each vertex in $R$ has  out-degree at most $\sqrt{\varepsilon}
\cdot t$.
\end{claim}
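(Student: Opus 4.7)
The plan is to bound the total number of red edges using the definition of an $\varepsilon$-regular partition, and then apply Lemma~\ref{lem:orientations_mad}. By condition~(3) of the definition of an $\varepsilon$-regular partition, at most $\varepsilon \binom{t}{2}$ of the pairs $(V_i,V_j)$ are irregular, so
\[
|E(R)| \le \varepsilon \binom{t}{2} \le \frac{\varepsilon t^2}{2}.
\]

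To invoke Lemma~\ref{lem:orientations_mad} with $d=\sqrt{\varepsilon}\cdot t$, it suffices to show $|E(H)|/|V(H)| \le \sqrt{\varepsilon}\cdot t$ for every subgraph $H$ of $R$. I would split into two cases depending on the size of $V(H)$. If $|V(H)| \ge \sqrt{\varepsilon}\cdot t$, then $H$ inherits the global edge bound, so
\[
\frac{|E(H)|}{|V(H)|} \le \frac{\varepsilon t^2/2}{\sqrt{\varepsilon}\cdot t} = \frac{\sqrt{\varepsilon}\cdot t}{2} \le \sqrt{\varepsilon}\cdot t.
\]
If instead $|V(H)| < \sqrt{\varepsilon}\cdot t$, we use only the trivial bound $|E(H)| \le \binom{|V(H)|}{2} \le |V(H)|^2/2$, giving
\[
\frac{|E(H)|}{|V(H)|} \le \frac{|V(H)|}{2} < \frac{\sqrt{\varepsilon}\cdot t}{2} \le \sqrt{\varepsilon}\cdot t.
\]
In both cases the hypothesis of Lemma~\ref{lem:orientations_mad} is satisfied, and the claim follows.

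There is no real obstacle here: the argument is essentially just "few edges forces low average density on every subgraph", and the two-case split of $|V(H)|$ against the natural threshold $\sqrt{\varepsilon}\cdot t$ is the standard way to turn a global edge bound into a uniform subgraph density bound suitable for Lemma~\ref{lem:orientations_mad}. The same reasoning is what underlies the appearance of $\sqrt{\varepsilon}$ (rather than $\varepsilon$) in the out-degree bound, which is exactly what is needed to absorb the irregular pairs into the $o(n)$-term later in the labelling.
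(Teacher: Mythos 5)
Your proof is correct and is essentially the paper's argument: both start from the bound $|E(R)|\le\varepsilon\binom{t}{2}$ guaranteed by the regularity partition and convert it into a uniform sparsity statement for all subgraphs at the threshold $\sqrt{\varepsilon}\cdot t$. The only cosmetic difference is that you verify the max-average-degree condition of Lemma~\ref{lem:orientations_mad} directly via a two-case split, while the paper phrases the same counting as a degeneracy argument (no subgraph of minimum degree exceeding $\sqrt{\varepsilon}\cdot t$) and orients edges along the resulting greedy ordering.
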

\begin{proof}
It is enough to show that $R$ is $(\sqrt{\varepsilon}
\cdot t)$-degenerate, i.e.\ $R$ has a vertex ordering such that each
vertex has at most $\sqrt{\varepsilon}
\cdot t$ neighbours preceding it in the order (note that given such
ordering, orienting all edges from successors to predecessors gives the
desired orientation). If $R$ is not $(\sqrt{\varepsilon}
\cdot t)$-degenerate, then it contains a subgraph $R'$ of minimum degree
more than $\sqrt{\varepsilon}
\cdot t$, and thus with more than $\sqrt{\varepsilon}
\cdot t$ vertices. It follows that $R'$ contains at least
$\tfrac12(\sqrt{\varepsilon} \cdot t)^2=\varepsilon \tfrac{t^2}2$ edges,
contradicting the fact that $R'$, as a subgraph of $R$, contains at most $\varepsilon {t \choose 2}$ edges.
\end{proof}
By the claim, we can encode the edges in $G$ corresponding to the red
edges using at most $\sqrt{ \varepsilon}\cdot n+O(\log n)$ bits as in the case of the grey edges. Indeed, we fix an orientation satisfying the claim, and use a
constant number of bits to record the orientation. For each $x\in V_i$
and each pair $(V_i,V_j)$ such that $ij$ is a red edge that has been
oriented from $V_i$ to $V_j$ in the orientation of $R$ resulting from
the claim above, we record the adjacencies from $x$ to $V_j$ naively
using a bit string of length $n/t\geq |V_j|$. Since the out-degrees in
the red graph $R$ are at most $\sqrt{\varepsilon}\cdot t$, this adds a total of at most $(\sqrt{\varepsilon}
\cdot t)(n/t)=\sqrt{ \varepsilon}\cdot n$ bits to each label.

\medskip

We next turn our attention to the white edges. Recall that a white edge $ij$ corresponds to an $\varepsilon$-regular pair $(V_i,V_j)$ of density at most $\delta$.
\begin{claim}
\label{claim:white}
Let $(V_i,V_j)$ be a pair corresponding to a white edge $ij$. Then we can find in time $O(n^2)$ an orientation of the edges between $V_i$ and $V_j$ so that each vertex has out-degree at most $2\delta n/t$.
\end{claim}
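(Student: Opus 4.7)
The plan is to reduce the claim to an application of Lemma \ref{lem:orientations_mad} on the bipartite subgraph $B$ of $G$ induced between $V_i$ and $V_j$. Write $s=|V_i|=|V_j|$; since $|V_0|\leq \varepsilon n$, we have $s\leq n/t$, so it is enough to produce an orientation of $B$ with maximum out-degree at most $2\delta s$. By Lemma \ref{lem:orientations_mad} applied to $B$, this reduces to showing
\[
\max_{H\subseteq B}\frac{|E(H)|}{|V(H)|}\;\leq\; 2\delta s.
\]

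I would verify this density bound by a case split on any subgraph $H$ with sides $X\subseteq V_i$ and $Y\subseteq V_j$. If either $|X|<\varepsilon s$ or $|Y|<\varepsilon s$, say $|X|<\varepsilon s$, then the trivial bound $e(X,Y)\leq|X|\cdot|Y|$ together with $|X|<\varepsilon s\leq \delta s$ gives $e(X,Y)\leq \delta s(|X|+|Y|)$. Otherwise both $|X|,|Y|\geq \varepsilon s\geq \varepsilon |V_i|,\varepsilon |V_j|$, so $\varepsilon$-regularity of $(V_i,V_j)$ yields $d(X,Y)<d(V_i,V_j)+\varepsilon\leq \delta+\varepsilon\leq 2\delta$, and combining with the elementary inequality $|X|\cdot |Y|\leq \tfrac{s}{2}(|X|+|Y|)$ (since $|X|,|Y|\leq s$) gives $e(X,Y)<\delta s(|X|+|Y|)$. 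In both cases the edge-to-vertex ratio is at most $\delta s\leq 2\delta s$, as required. The key point enabling this argument is that we set $\varepsilon\leq \delta$ earlier in the proof, which makes the irregular sub-pairs automatically sparse.

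For the algorithmic part, I would invoke a constructive version of Lemma \ref{lem:orientations_mad}: the proof via the max-flow formulation (vertices on one side, edges on the other, capacities $2\delta s$ on vertices and $1$ on edges) produces the desired orientation, and since the bipartite graph $B$ has at most $s^2\leq n^2/t^2$ edges and $2s$ vertices, the flow computation can be carried out in time polynomial in $s$; with appropriate bookkeeping one obtains an $O(n^2)$ bound (in fact, since we have slack between the edge-density $\delta s$ and the target out-degree $2\delta s$, an even simpler $(2\delta s)$-degeneracy peeling suffices: repeatedly remove a vertex of degree at most $2\delta s$ and orient its incident edges outward, which runs in linear time in $|E(B)|=O(n^2/t^2)$).

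The main obstacle is simply to use $\varepsilon$-regularity in the right way: one must recognize that bounding $|E(H)|/|V(H)|$ on all subgraphs (not just $(V_i,V_j)$ itself) requires separating balanced subgraphs, controlled by $\varepsilon$-regularity, from imbalanced ones, controlled trivially; and that the inequality $|X||Y|\leq \tfrac s 2(|X|+|Y|)$ is what converts the density statement on $|X||Y|$ into an edge-to-vertex bound. Everything else is bookkeeping, with the $2\delta$ in $2\delta n/t$ absorbing both the $\delta$ coming from the density bound and the $\varepsilon$-slack.
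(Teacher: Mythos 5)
Your proof is correct and is essentially the paper's argument: the paper also reduces the claim to showing the bipartite subgraph between $V_i$ and $V_j$ is $(2\delta n/t)$-degenerate, deriving a contradiction from a subgraph of minimum degree more than $2\delta n/t$ via $\varepsilon$-regularity and $\varepsilon\le\delta$, and then orients edges along the degeneracy ordering. The only cosmetic difference is that your case split for unbalanced subgraphs is unnecessary in the paper's formulation, since a minimum degree above $2\delta n/t$ already forces both sides to exceed $\varepsilon|V_i|$, and the paper goes directly to the peeling order rather than through Lemma~\ref{lem:orientations_mad}.
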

\begin{proof}
As in the previous claim, it is enough to show that the subgraph of
$G$ induced by the edges between $V_i$ and $V_j$ is $(2\delta
n/t)$-degenerate (finding the corresponding vertex ordering can easily be done in time $O(n^2)$).  If it is not, then it contains some nonempty subgraph
with minimum degree more than $2\delta n/t$.  Let $W_i$ and $W_j$ be
the intersection of the vertex set of this subgraph with $V_i$ and
$V_j$, respectively. But then $\min(|W_i|,|W_j|)\ge 2\delta
n/t>\varepsilon |V_i|=\varepsilon |V_j|$ (where we have used $|V_i|\le n/t$ and $\varepsilon\le \delta$) while $d(W_i,W_j)> 2\delta\geq \delta +\varepsilon$.  This contradicts the fact that $(V_i,V_j)$ is an $\varepsilon$-regular pair of density at most $\delta$.
\end{proof}
Let $W$ be the (bipartite) subgraph of $G$ induced by all the edges
connecting sets $V_i$ and $V_j$ such that $ij$ is a white edge. The
claim above implies that $W$ has an orientation such that each vertex
has out-degree at most $t \cdot 2\delta n/t=2 \delta n$. Hence by Corollary \ref{cor:dict_opt}, we can encode the adjacencies of $W$ using at most $2H(2\delta) n+o(n)$ bits.

\medskip

The black edges can be handled similarly, by encoding non-adjacencies
instead of adjacencies. More specifically, we consider the graph
with vertex set $V$ in which we add an edge between $v\in V_i$ and $w\in
V_j$ whenever $ij$ is a black edge and $vw$ are not adjacent in $G$
(in other words, we consider the complement of $G$ and only keep edges
between pairs of sets $V_i,V_j$ such that the edge $ij$ is black). By symmetry,
the analysis above shows that this graph has an orientation such that
each vertex $v$ has out-degree at most $2\delta n$, and we can again apply Corollary \ref{cor:dict_opt}. 
\medskip

In total, we used at most
\[
\left(2\delta+\frac12\left(1-\frac1{r}\right)+\sqrt{\varepsilon} +2 \cdot 2H(2\delta)  +o(1)\right)\cdot n
\]
bits in each label. Note that $\varepsilon\leq \delta$ and that $(1+2\delta)\cdot H(2\delta)\to 0$ as $\delta\to 0$. So we have proved that for $n$ sufficiently large that the labels have at most
\[
\left(1-\frac1{r}+f(\delta)\right)\cdot \frac{n}2=\left(1-\frac1{\chi_c(\mathcal{G})}+f(\delta)\right)\cdot \frac{n}2
\]
bits for a function $f$ such that $f(\delta)\to 0$ when $\delta\to 0$. This completes the analysis of the length of the labels.

For each of the encodings described above, we reserve a block of bits and start the block with the length of the block. This allows us to efficiently navigate to the next block.

\medskip

We now explain how the decoder works. Given vertices $v,w$, the
decoder first reads off the indices $i,j\in \{0,\dots,t\}$ with $v\in
V_i$ and $w\in V_j$. If $i=0$ then the decoder reads the position of
$v$ in the order on $V_0$, and reads off whether $v$ is adjacent to
$w$ from the label of $w$. A similar procedure applies if $j=0$ or if $i=j$. So we may assume $i>j\geq 1$. We read off the colour of the edge $ij$ in the auxiliary graph $K_t$ from the label of either of the two vertices, and depending on the colour we read different parts of the labels again. 

For grey or red edges, we read off the orientation of the grey or red subgraphs respectively to determine whether $v$ encoded the adjacency $vw$ or vice versa, and can then find the desired bit in the string of either $v$ or $w$ that encodes whether there is an edge between $v$ and $w$. For the black or white edges, we jump to the part of the label of $v$ that encodes the adjacencies (or non-adjacencies) between $v$ and the sets $V_j$ such that $ij$ is a white or black edge. The bit string stored there is used to do a membership test as
to whether $w$ is part of the special set stored for $v$ corresponding
to its out-neighbourhood in the orientation of Claim \ref{claim:white}. If the edge $ij$ is white we need a positive answer to the membership test, while if $ij$ is black we want a negative answer. This can be done in constant time by Corollary \ref{cor:dict_opt}. We then repeat this procedure with $v$ and $w$ switched in order to determine whether $v$ and $w$ are adjacent.

\smallskip

Finally, we describe how to obtain an `almost\footnote{The constant in the running time in Lemma \ref{lem:rl:alg_sze} depends linearly on $N$ and so the encoder will still take time polynomial in $n$, but the number of bits that are inspected by the decoder is no longer constant (since $N$ now depends on $n$). However, we can make the dependence as good as we like by letting $\delta$ tend to zero more slowly.}
efficient' adjacency labelling scheme using $(1-\frac1r+o(1))n$ bits from the schemes above that depend on $\delta>0$. We encode $n$ into the label of each vertex; the decoder uses this to select a $\delta=\delta(n)$ which tends to zero (implying $f(\delta) =o(n)$), while maintaining $n\geq n_2(\delta(n))$. We can for example choose $\delta(n)$ of the form $C(\log^* n)^{-1/20m}$, where $m$ is a constant depending on the graph class defined in the second paragraph of this proof.
\end{proof}

\section{Adjacency Labelling in Comparability Graphs}
\label{sec:rl:digraphs}

We first give the simple version of our labelling scheme for comparability graphs, which allows
for a trade-off between the number of bits used for the label and the
decoding time (measured here by the number of bits inspected by the decoder).
\begin{theorem}
\label{thm:trade_off}
For any $s=s(n)$, the class of comparability graphs has an adjacency labelling scheme with
labels of at most 
\[
n/4+1000s^{-1/3}n\log^2 n+2s
\]
bits (assuming $n\ge 4$), which can be constructed in time
$O(s^{2/3}n^3)$. Moreover the decoder only needs to inspect at most $2s+1000\log^2 n$ bits.
\end{theorem}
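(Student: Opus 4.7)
My plan is to work from a linear extension $v_1 < v_2 < \cdots < v_n$ of the poset $P$ and partition the vertices into blocks $B_1, \ldots, B_{\lceil n/s \rceil}$, with $B_k = \{v_{(k-1)s+1}, \ldots, v_{\min(ks,n)}\}$. The label of $v \in B_k$ will consist of: an $O(\log n)$-bit header giving $k$ and the position of $v$ in $B_k$; a direct-lookup block of $2s$ bits listing $v$'s comparabilities with the (at most) $2s$ other vertices in $B_{k-1} \cup B_k \cup B_{k+1}$, so that close-range queries are answered by reading $\leq 2s$ bits (matching the $2s$ term in both the label size and the bits-inspected bound); a ``bulk'' bipartite component of $n/4 + O(\log n)$ bits obtained via Lemma~\ref{lem:bipartite}; and a folklore-dictionary store via Theorem~\ref{thm:dictionaries} of the residual comparabilities, costing $O(s^{-1/3} n \log^2 n)$ bits and requiring $O(\log^2 n)$ bits inspected per query.

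For the bulk bipartite component, observe that in a linear extension any two comparable vertices must appear in the order given by $<$, so the cross-median comparabilities of $G$ form a bipartite subgraph $H$ between $V_L = \{v_1, \ldots, v_{\lfloor n/2 \rfloor}\}$ and $V_R = V \setminus V_L$; applying Lemma~\ref{lem:bipartite} to $H$ gives labels of $n/4 + O(\log n)$ bits per vertex and captures all cross-median comparabilities. The within-half comparabilities are then handled by the folklore dictionary, whose size we would like to keep at $O(s^{-1/3} n \log n)$ entries per vertex. To achieve this, I plan to iterate the bipartite labelling recursively within each half at a coarser scale (partitioning $V_L$ and $V_R$ in half again, and so on, down to blocks of size $s$), carefully ensuring that each comparability is encoded in exactly one level so that the total per-vertex cost does not blow up. The $s^{-1/3}$ exponent would emerge from optimising this multi-level trade-off against the dictionary overhead.

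The crux of the proof — and what I expect to be the hardest step — is the residual-sparsity bound: showing that after the cross-median bipartite labellings at each recursive scale, each vertex has at most $O(s^{-1/3} n \log n)$ comparabilities left to store. A naive recursion gives $\Theta(n/2)$ bits per vertex (each level adds $n/4 + n/8 + \cdots$), so one needs either an asymmetric-encoding trick (each cross-half edge stored in only one endpoint's label, rather than both) to avoid double-counting across levels, or a potential-style argument exploiting the transitivity of the poset to bound how many edges survive all the bipartite encodings. The construction time $O(s^{2/3} n^3)$ would then come from the cost of running Lemma~\ref{lem:bipartite} on each bipartite graph in the recursion (each at most $O(n^2)$) and building the folklore dictionaries; the decoder's $2s + O(\log^2 n)$ bit-inspection follows from reading the direct window and making a single dictionary query.
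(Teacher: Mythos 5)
The crux you flag --- the residual-sparsity bound after the recursive median cuts --- is not merely the hardest step: it is false, and neither of the rescue ideas you mention can work. The subgraph of $G$ induced on $V_L$ is the comparability graph of the subposet on the first $\lfloor n/2\rfloor$ elements of the linear extension, and this subposet can be an \emph{arbitrary} poset on $\lfloor n/2\rfloor$ elements (every poset on $m$ elements arises this way). Since there are $2^{(1/4+o(1))m^2}$ posets on $m$ elements, any labelling of the within-half comparabilities must use $n/8+o(n)$ bits per vertex of that half; the residue after the top-level cut is a full-density comparability graph, not $O(s^{-1/3}n\log n)$ comparabilities per vertex. The recursion therefore necessarily accumulates $n/4+n/8+n/16+\cdots=n/2+o(n)$ bits per vertex, i.e.\ the trivial bound. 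Transitivity does not help, because nothing forces the dense part of a poset to straddle the median of a linear extension, and asymmetric storage does not help either: Lemma~\ref{lem:bipartite} is already information-theoretically tight for a generic balanced bipartite graph, so there is no factor of $2$ to recover.

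The paper avoids this by finding, for each poset, a single \emph{adaptive} bipartition outside of which everything really is sparse. It treats three regimes. First, \emph{heavy} pairs $x<y$ (those with at least $\gamma n$ elements $z$ with $x<z<y$, $\gamma=3s^{-1}\ln n$) are always edges, and a greedy covering argument produces a hitting set $S$ of at most $s$ middle elements so that each vertex stores $2s$ bits of comparabilities with $S$ --- this, not a block window in the linear extension, is where the $2s$ terms come from. Second, for \emph{hubs} (vertices of in- and out-degree at least $\delta n$, $\delta=s^{-1/3}\ln n$) one finds a small set $T$ of covering pairs so that every light edge at a hub lies in one of the graphs $G_0^{\pm}(x)$, $x\in T$; these graphs have maximum out-/in-degree at most $\gamma n$ precisely because heavy pairs have been removed, so Theorem~\ref{thm:dictionaries} applies. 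Third, the remaining non-hub vertices split into those of out-degree at most $\ell$ and those of in-degree at most $\ell$, and the unique $\Theta(n)$-bit component of the whole label is the single bipartite graph between these two classes, encoded once via Lemma~\ref{lem:bipartite}. To salvage your outline you would have to replace the median cut by this degree-based cut and replace the recursion by the heavy-pair/hub analysis; as written, the proposal cannot get below $n/2$ bits per vertex.
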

\begin{proof}
Let $G=(V,E)$ be a comparability graph with underlying partial order $(V,<)$. The partial order induces a natural orientation of the edges of $G$, by orienting each edge $uv\in E$ with $u<v$ from $u$ to $v$.
In the remainder of the proof, it will be convenient to mix the notation for oriented graphs ($N^+(x)$ and $N^-(x)$ for the out-neighbourhood and
in-neighbourhood of $x$, for instance) with the notation for posets.
We first take a linear ordering $v_1,v_2,\ldots, v_n$ of $(V,<)$ (i.e.\ an
ordering such that if $v_i<v_j$ then $i<j$) and
let each vertex $v_i$ store its position $i$ in the order (this uses
a single word of $\lceil \log n\rceil$ bits).

Given a pair $x,y$ of vertices of $G$ with $x<y$, we say that a vertex $z$
is \emph{covered} by the pair $x,y$ if $x<z<y$. We say that a pair
$x,y$ of vertices of $G$ with $x<y$  is
\emph{heavy} if the number of vertices covered by the pair $x,y$ is at
least $\gamma n$ for $\gamma=3s^{-1}\ln n$. Note that for every heavy pair $x,y$, we have $xy\in E$. Any pair of vertices $x,y$ of $G$ that is not heavy is said
to be \emph{light} (in particular all pairs of non-adjacent vertices in $G$ are light). Observe that
if $x,y$ is a heavy pair and  $z$ is a vertex of $G$ chosen uniformly at
random, then $z$ is covered by the pair $x,y$ (in the sense defined above) with probability at least $\gamma$. It follows
that for any collection $B$ of
heavy pairs in $G$, there is a vertex $z$ covered by a fraction of at
least $\gamma$ of the heavy pairs of $B$, and such a vertex $z$ can be easily found in time $O(n^2)$. 

We now construct a set $S$ of at most $s$ vertices as follows. Let $B$ be the set
of all heavy pairs in $G$. As long as $B\ne \emptyset$ and $|S|<s$, we find a vertex
$z$ covered by a fraction of at least $\gamma$ of the pairs of $B$, as
described above. We then add $z$ to $S$ and remove from $B$ all the
pairs covering $z$. Each time we add an element to $S$, the size of
$B$ is multiplied by a factor at most $1-\gamma\le \exp(-\gamma)$, so
there is no heavy pair remaining after this procedure, since
$\exp(-\gamma s) n^2=\exp(-3\ln n) n^2< 1$. So, the set $S$ has the property that
for any heavy pair $x,y$, there is a vertex $z\in S$ such that $x<z<y$
or $y<z<x$ (i.e.\ the heavy pair $x,y$ covers $z$). Moreover, $S$ can be constructed in time $O(sn^2)$.

We now arbitrarily order the elements of $S$ as $u_1,\ldots,u_s$, and
each vertex $u_i$ of $S$ stores its index $i$. Each vertex $v$ of $G$
stores a bit string of $2|S|=2s$
bits such that the
$(2j-1)$-th bit of the string tells whether $v<u_j$ and the $2j$-th whether $v>u_j$.
For any pair $x,y$, we can find whether there is a vertex $z\in S$
such that $x<z<y$ by inspecting at most $2s$ bits. This is enough to determine
adjacencies for all heavy pairs, as for any heavy pair $x,y$, there is
an edge between $x$ and $y$ if and only if there is a
vertex $z\in S$ 
such that $x<z<y$ or $y<z<x$ (and such a vertex $z$ can be identified by inspecting the
labels of $x$ and $y$). 

\smallskip

It remains to determine adjacencies between light pairs of vertices.
We consider the subgraph $G_0$ of $G$ induced by all edges $xy$ such
that $x,y$ forms a light pair. As explained above, it remains to find an adjacency labelling of this
subgraph. In this paragraph we fix a vertex $x$ in $G_0$, and let $G_0^-(x)$ denote the subgraph of $G_0$ induced by the in-neighbourhood $N^-(x)$ in $G_0$. Note that each vertex
$y$ of $G_0^-(x)$ has out-degree at most $\gamma n$ in $G_0^-(x)$, since
otherwise $x,y$ would form a heavy pair, and thus would not be adjacent
in $G_0$. 
We claim that the adjacencies of $G_0^-(x)$ can be labelled using at most $4\gamma n \log n\leq 10 s^{-1} n\log^2n$ bits in the label of each vertex of $G_0^-(x)$.
Each vertex stores the set of indices $i$ of its out-neighbours $v_i$ in $G_0^-(x)$ using at most $4 \gamma n \log n$ bits via the scheme of Theorem~\ref{thm:dictionaries} (using $k=\lfloor \gamma n\rfloor$). This reduces determining whether two vertices $x,y$ are
adjacent in $G_0^-(x)$ to two membership queries: ``is $y$ in the out-neighbourhood of $x$ in this graph?'' and ``is $x$ in the out-neighbourhood of $y$ in this graph?''). The two membership queries require reading at most $8 \log (\gamma n)\cdot
\log n\leq 8 \log^2n$ bits and the encoding can be done in time $O(n\log n)$.

Similarly, if we denote by $G_0^+(x)$ the subgraph of $G_0$ induced by the out-neighbourhood $N^+(x)$ in $G_0$, then we can encode the adjacencies in $G_0^+(x)$ efficiently because each vertex
of $G_0^+(x)$ has in-degree at most $\gamma n$ in $G_0^+(x)$. 
Later in the proof, we will use this scheme for a handful of
well-chosen vertices $x$ of $G_0$, with the goal of covering most of
the light edges of $G$ with few graphs $G_0^-(x)$ and $G_0^+(x)$.

\smallskip

We say that a vertex $z$ of $V$ is \emph{popular} if it has in-degree
and out-degree at least $\delta n$ in $G$ with $\delta=s^{-1/3}\ln n$. Otherwise $z$ is said to be a
\emph{unpopular}. Recall that a pair $x,y$ \emph{covers} $z$ if
$x<z<y$ or $y<z<x$.
Observe that if we take a pair of vertices $x,y$ uniformly at random in $V$, then
any given popular vertex $z$ is covered by the random pair $x,y$ with probability at least $\delta^2=s^{-2/3}\ln^2n$. It follows that for any collection of popular vertices, there is a pair $x,y$ in $G$ that
covers a fraction of at least $\delta^2$ of the vertices in the
collection, and such a pair can be found in time $O(n^3)$. Hence we
may find a set $T$ of vertices of size at most $2t$, with $t=s^{2/3}$, in time
$O(n^3s^{2/3})$, such that all popular vertices are covered by some pair of $T$, since 
\[
(1-\delta^2)^{t}n\le \exp(-t\delta^2)n\le \exp(-s^{2/3}s^{-2/3}\ln^2n)n<1.
\]
For each vertex $x\in T$ we store the adjacencies inside $G_0^+(x)$ and
$G_0^-(x)$ as explained above, at a total cost of $4t\cdot 10 s^{-1} n\log^2
n=40s^{-1/3}n\log^2 n$ bits in the label of each vertex of
$V$.

For each popular vertex $z$, we encode a pair $x,y$ in $T$ with $x<z<y$ in the
label of $z$. That uses at most $2\lceil \log n\rceil\le 4\log n$ bits. For any neighbour $v$ of
$z$ in $G_0$, either $v>z$ in which case $z,v\in G_0^+(x)$, or $v<z$
in which case $z,v\in G_0^-(y)$. Hence we can hence determine whether a popular vertex
$z$ and some vertex $v$ are adjacent in $G_0$ by inspecting the adjacency
labelling schemes corresponding to $G_0^-(x)$, $G_0^+(x)$, $G_0^-(y)$
and $G_0^+(y)$, where $x,y$ is a heavy pair of $T$ (encoded in the
label of $z$) that covers $z$. This last step requires reading at most
$4\cdot 8\log^2 n$
bits in the labels of $z$ and $v$.

It remains to handle the adjacencies between unpopular vertices. Let $\ell=\lfloor s^{-1/3} n\ln n\rfloor$. By definition of unpopular, in the subgraph of $G$ induced by these vertices, all vertices have out-degree at most $\ell$ or in-degree at most $\ell$.
Let $V^+$ be the set of vertices of out-degree at
most $\ell$, and let $V^-$ be the set of vertices of $V(G)\setminus
V^+$ of in-degree at most $\ell$. By the same argument as we used to label the adjacencies of $G_0^-$ above, we can record the adjacencies in the subgraph of $G$ induced by $V^+$ and in the subgraph induced by $V^-$ at a cost of at most $4\ell \log n$ bits in the label of each vertex using Theorem \ref{thm:dictionaries}. The subgraph $B$ induced by the edges between $V^+$ and $V^-$ is bipartite, so
we can record the adjacencies in this subgraph at a cost of $n/4+10\log n$ bits in the label of each vertex by Lemma \ref{lem:bipartite}. When deciding if $u$ is adjacent to $v$ from the labels, we first read their index in the linear order and then at most $16\log \ell \cdot \log n$ bits to see if one is in the in- or out-neighbourhood of the other. If this is not the case, we then inspect a further at most $20\log n$ bits (by Lemma \ref{lem:bipartite}) to verify if the vertices are adjacent in $B$. The vertices $u$ and $v$ are adjacent if and only if at least one of these two tests has a positive answer.

We can hence label the adjacencies between the unpopular vertices at a cost
of at most $n/4+20s^{-1/3}n\log^2 n$ bits in the label of each vertex,
and the decoder for the adjacencies in this subgraph inspects at most
$40 \log(s^{-1/3} n\ln n)\log n\le 100 \log^2 n$ bits of the labels.

\medskip

In total, each label has at most
\[
n/4+(40+20)s^{-1/3} n\log^2n+2s+100\log n 
\]
bits. Moreover the labelling can be constructed in time $O(n^3 s^{2/3}+sn^2)=O(n^3 s^{2/3})$ and the decoding can be done by inspecting at most $2s+1000\log^2 n$ bits.
\end{proof}
Setting $s^{1/3}=n^{1/4}$, Theorem \ref{thm:trade_off} gives labels of $n/4+O(n^{3/4}\log^2 n)$ bits with encoding time $O(n^{7/2})$ and decoding time $O(n^{3/4})$. With $s=\log^9 n$, we obtain labels of $n/4+O(n/\log n)$ bits with encoding time $O(n^{3}\log^6 n)$ and decoding time $O(\log^9 n)$.

In order to get the constant decoding time required for Theorem~\ref{thm:comparability}, we perform a tighter analysis of the proof above to show the following result. 
\begin{theorem}
\label{thm:constant}
The class of comparability graphs admits an adjacency labelling scheme with labels of 
\[
n/4+O\left(\frac{n(\log \log n)^2}{\log^{1/4} n}\right)
\]
bits, with constant query time in the word-RAM model with words of size $\Theta(\log n)$. The encoding can be done in time $O(n^4)$.
\end{theorem}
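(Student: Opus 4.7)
The plan is to revisit the proof of Theorem~\ref{thm:trade_off}, replacing its folklore dictionaries (Theorem~\ref{thm:dictionaries}) by the constant-query dictionaries of Corollary~\ref{cor:dict_opt} and refining two aspects of the construction so that the label size meets the target while decoding uses only $O(1)$ word operations.

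The first refinement concerns the hub-related encoding: applying Corollary~\ref{cor:dict_opt} separately to each of the $O(|T|)$ subgraphs $G_0^{\pm}(x)$ would accumulate a per-vertex overhead $O(|T|\cdot n(\log\log n)^2/\log n)$, which is too large. To avoid this, each vertex $v$ stores instead a single combined dictionary for the set $X_v\subseteq T\times V$ of all pairs $(x,u)$ with $u$ an out-neighbour of $v$ in some $G_0^{\pm}(x)$. Since $|X_v|=O(|T|\,\gamma n)$ lies in a universe of size $O(|T|\,n)$, one application of Theorem~\ref{thm:dictionaries_opt} gives an overhead of only $O(|T|\,\gamma n(\log\log n)^2/\log n)$, saving a crucial factor $\gamma$. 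The second refinement is to cover hubs with separate in- and out-hitting sets $T^{-},T^{+}$ of size $O(\log n/\delta)$ each, using that every hub has at least $\delta n$ in-neighbours and at least $\delta n$ out-neighbours, so that a uniformly random set of that size hits them with high probability; this gives $|T|=O(\log n/\delta)$ in place of the pair-cover bound $\Theta(\log n/\delta^2)$ used in Theorem~\ref{thm:trade_off}.

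Choosing $\delta\sim(\log\log n)/\log^{1/4}n$ so that $H(\delta)\,n=O(n(\log\log n)^2/\log^{1/4}n)$ controls the non-hub contribution, $|T|\sim\log^{5/4}n/\log\log n$, $\gamma\sim(\log\log n)/\log^{3/2}n$ so that simultaneously $|T|\gamma=O(\log^{-1/4}n)$ and $|T|H(\gamma)=O(\log\log n/\log^{1/4}n)$, and $s\sim\log^{5/2}n/\log\log n$, all contributions to the label size---the bipartite labelling $n/4+O(\log n)$ from Lemma~\ref{lem:bipartite}, the two non-hub dictionaries of $2H(\delta)\,n$ bits, the combined hub dictionary with main term $O(|T|H(\gamma)n)$ and overhead $O(|T|\gamma n(\log\log n)^2/\log n)$, and the $2s$ bits for storing relations with $S$---sum to the target $n/4+O(n(\log\log n)^2/\log^{1/4}n)$. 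The encoding time is $O(n^4)$, dominated by building each vertex's combined hub dictionary over a universe of size $|T|\,n$ via Theorem~\ref{thm:dictionaries_opt}.

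The main obstacle is making the heavy-pair check---``does there exist $u_j\in S$ with $x<u_j<y$?''---run in $O(1)$ word operations, since the naive bit-vector encoding of a vertex's relationship with $S$ uses $\omega(\log n)$ bits (hence $\omega(1)$ words) for the chosen $s$. I would address this by augmenting the $S$-relationship data stored at each vertex with an auxiliary structure tuned to the poset, for example by sorting $S$ along a linear extension and storing per-vertex threshold indices that cut $S$ into a constant number of contiguous blocks in which the pair query can be resolved by a single dictionary membership test, rather than by a full bit-vector intersection.
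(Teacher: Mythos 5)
Your overall strategy (rerun the proof of Theorem~\ref{thm:trade_off} with the constant-query dictionaries of Corollary~\ref{cor:dict_opt} and retune the parameters) is the right one, and your bookkeeping for the dictionary terms is essentially correct. But the proposal has a genuine gap exactly where you flag ``the main obstacle'': with your parameters $\gamma\sim(\log\log n)/\log^{3/2}n$ and $s\sim\log n/\gamma\sim\log^{5/2}n/\log\log n$, the query ``is there $u_j\in S$ with $x<u_j<y$?'' asks whether the up-set of $x$ in $S$ meets the down-set of $y$ in $S$, and these are \emph{not} contiguous blocks in a linear extension --- if $S$ contains a large antichain (say in a height-two poset), the traces $\{z\in S: x<z\}$ can be essentially arbitrary subsets of $S$ as $x$ varies. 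Deciding emptiness of the intersection of two arbitrary subsets of an $s$-element universe with $s=\omega(\log n)$ cannot be reduced to a constant number of dictionary membership tests, and your ``threshold indices cutting $S$ into a constant number of blocks'' does not produce a valid decoder. So as written the scheme does not achieve constant query time.

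The paper resolves this by keeping $s=O(\log n)$, so that the two indicator strings $s_x^+,s_y^-$ occupy $O(1)$ words and the test is a single bitwise AND. The price is that $S$ no longer covers all heavy pairs; the paper sets $s=\lceil -2\ln(\gamma)/\gamma\rceil$ with $\gamma=\log^{-3/4}n$, so the uncovered heavy pairs $B'$ number at most $\gamma^2n^2$, and the graph they span is $2\gamma n$-degenerate and can be absorbed into one more application of Corollary~\ref{cor:dict_opt} at cost $4H(2\gamma)n+o(n/\log^{1/4}n)$. The same ``allow a small remainder'' idea is applied to the hubs: $T$ is only required to cover all but $\delta n$ hubs, which gives $t=O(\log^{1/2}n\cdot\log\log n)$ --- smaller than your $|T|\sim\log^{5/4}n/\log\log n$ --- after which the leftover hubs $R$ are handled by storing $N_G(v)\cap R$ in a dictionary of $2H(\delta)n+o(n/\log^{1/4}n)$ bits. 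With this smaller $t$ the per-graph dictionary overheads sum to $O\bigl(t\cdot n(\log\log n)^2/\log n\bigr)=o\bigl(n(\log\log n)^2/\log^{1/4}n\bigr)$, so neither your combined dictionary over $T\times V$ nor your separate in-/out-hitting sets is needed. If you want to salvage your write-up, import this remainder trick for both $S$ and $T$ and revert to $\gamma=\log^{-3/4}n$, $\delta=\log^{-1/4}n$; otherwise the constant-time claim for the $S$-test is unsupported.
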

\begin{proof}[Sketch]
We follow the proof of Theorem \ref{thm:trade_off}. When creating the set $S$ of $s$ vertices, since we do not necessarily make $S$ large enough, there may be some heavy pairs that do not cover any vertex of $S$. Let $B'\subseteq B$ denote those pairs. We set $s=\lceil -2\ln(\gamma)/\gamma \rceil$ so that 
\[
|B'|\leq \exp(-\gamma s)n^2\leq \gamma^2 n^2.
\]
Let $G_1$ be the (undirected) subgraph of $G$ induced by all edges $uv$ such that $u,v$ is a heavy pair from $B'$. Then the vertices of $G_1$ can be ordered greedily as $v_1,\ldots,v_n$ such that each vertex has at most $2\gamma n$ neighbours
preceding it in the order.
Indeed, if a subgraph of $G_1$ had minimum degree at least $2\gamma
n$, then it would have at least $2\gamma n$ vertices and hence $G_1$ would have at least $\tfrac12(2\gamma n)^2>(\gamma n)^2$ edges, contradicting the fact that $|B'|\leq (\gamma n)^2$. This gives us an orientation of $G_1$ in which each vertex has out-degree at most $2\gamma n$. Using Corollary \ref{cor:dict_opt}, we find an adjacency labelling for $G_1$ with labels of size 
\[
4H(2\gamma)n+ O(n(\log \log n)^2/\log n)
\]
in time $O(n^2)$ with constant query time. 

Similarly, when constructing the set $T$ of size $t$, we might have some popular vertices $z$ remaining that do not have a pair $x,y\in T$ with $x<z<y$. We set $t=\lceil -\ln(\delta)/\delta^2 \rceil$ so that this set $R$ of remaining popular vertices has size at most
\[
|R|\leq \exp(-\delta^2 t)n\leq \delta n.
\]
We apply Corollary \ref{cor:dict_opt} to encode the neighbourhoods $N_G(v)\cap R$ for each vertex $v$ of $G$ using at most $2H(\delta)n+O(n(\log \log n)^2/\log n)$ bits.

For $x\in T$, the out- and in-degree in $G_0^-(x)$ and $G_0^+(x)$ respectively are bounded by $\gamma n$. When encoding the adjacencies of the graphs $G_0^-(x)$ and $G_0^+(x)$, we use Corollary \ref{cor:dict_opt} instead of Theorem \ref{thm:dictionaries} to encode the neighbourhoods. 
The graph induced by the unpopular vertices has for each vertex either the in- or the out-degree bounded by $\delta n$. To encode the adjacencies in these graphs, we again apply Corollary \ref{cor:dict_opt}. In total, we used at most
\begin{equation*}
n/4+4(H(2\gamma)+tH(\gamma))n+(2+4)H(\delta)n+2s+O(n(\log \log n)^2/\log n)
\end{equation*}
bits. It can be checked that $H(p)=-p\log p -(1-p)\log(1-p) \leq -2p\log p$ for
$p\in (0,\frac12)$. The result now follows by setting
\begin{align*}
\delta &= \log^{-1/4} n ,\\
\gamma &= \log^{-3/4} n,\\
t&=\lceil -\ln(\delta)/\delta^2 \rceil=O(\log^{1/2} n \cdot \log \log n),\\
s&=\lceil -2\ln(\gamma)/\gamma \rceil=O(\log^{3/4} n\cdot \log \log n).
\end{align*}
All the parts that were encoded using the dictionaries from
Corollary~\ref{cor:dict_opt} can be decoded in constant time in the
word RAM model. We also need to check for every pair of vertices $x,y$
whether there is a vertex $z\in S$ with $x<z<y$. For each $x$, we create two
strings $s_x^+$ and $s_x^-$ of length $|S|=s=O(\log n)$ that record
whether the $i$-th vertex of $S$ is in $N^+(x)$ or $N^-(x)$
respectively. Given two vertices $x$ and $y$, we can find whether
there is a $z\in S$ with $x<z<y$ by computing the bitwise AND-function on $s_x^+$ and $s_y^-$. These two strings correspond to a constant number of words and therefore this can be done in constant time in the word RAM model with words of $\Theta(\log n)$ bits.
\end{proof}

\section{Conclusion}\label{sec:ccl}
Our main result shows that for all hereditary graph classes $\mathcal{G}$, there exists an adjacency labelling scheme using $\mu_{\mathcal{G}}(n)+o(n)$ bits per vertex, which is optimal up to the $o(n)$ term. For hereditary graph classes with $\chi_c(\mathcal{G})=1$, the leading term in our label size is $o(n)$; for those graphs classes the following problem remains open.
\begin{problem}
Is it true that every hereditary family of graphs $\mathcal{G}$ with $\mu_{\mathcal{G}}(n)=\Omega(\log n)$ has an adjacency labelling scheme using labels of size $(1+o(1))\mu_{\mathcal{G}}(n)$?
\end{problem}
We remark that it is already difficult to obtain the correct leading term for the size of $\mathcal{G}_n$ when $\chi_c(\mathcal{G})=1$. In the range $\log n\le \mu_\mathcal{G}(n) = o(n)$, it was proved in~\cite{BBW01} that the behaviour of  $\mu_\mathcal{G}(n)$ when $n\to \infty$ can be fairly erratic, with examples showing that the function $\mu_\mathcal{G}(n)$ can oscillate between the two extremes of the range as $n$ grows. However, it might be the case that for some specific hereditary classes containing $2^{o(n^2)}$ $n$-vertex graphs, labelling schemes using labels of size $(1+o(1))\mu_{\mathcal{G}}(n)$ can be obtained (for instance using sparse versions of the regularity lemma).

\medskip

We have shown that we can represent a poset using labels of $n/4+O(n^{3/4}\cdot \log^2n)$ bits per vertex.  Kleitman and Rothschild \cite{KleitmanRothschild} showed that there are at most $2^{n^2/4+Cn^{3/2}\log n}$ posets for some constant $C$ and hence an improvement of the lower order term beyond $O(\sqrt{n})$ would be especially interesting. In order to get the better lower order term, we did need to sacrifice the decoding time significantly. We also leave open whether it is possible to get the best of both worlds: can we get a reachability labelling scheme for digraphs with labels of $n/4+O(n^c)$ bits for some constant $c<1$ with constant query time in the word RAM model?

\medskip

We conclude with a nice related problem. It is known that
planar graphs have an adjacency labelling scheme with labels of
$O(\log n)$ bits since the work of Muller~\cite{Mul88} and Kannan, Naor and
Rudich~\cite{KNR88,KNR92} (and it was proved recently that labels of
$(1+o(1))\log n$ bits can
indeed be obtained, close to the lower bound of $\log
n$ bits~\cite{DEJGMM20}). A natural question is whether planar digraphs
also have a
reachability labelling scheme with labels of $O(\log n)$ bits. This is
related to the question of whether planar posets have constant boolean
dimension (see~\cite{FMM20}). The best known reachability labelling
scheme for planar digraphs is due to Thorup~\cite{Tho04}; it uses
labels of $O(\log^2 n)$ bits.

\paragraph{Update May 7, 2021} 
In an earlier version of the paper, we claimed the existence of a universal poset on $2^{(1+o(1))n/4}$ elements, but we recently found a flaw in our argument. We concluded the existence from our comparability labelling scheme, but although this leads to a universal graph for the class of comparability graphs, this universal graph need not be a comparability graph itself.
Unfortunately, we have to take back the claim and we in particular leave open the question raised by Hamkins~\cite{MOquestion} on the minimal size is of a poset that is universal for all posets of size $n$.

\section*{Acknowledgements}
We are grateful to the referees for some helpful suggestions.

\color{black}
\bibliographystyle{scott}
\bibliography{reachability}

\end{document}